\documentclass[11pt]{amsart}

\usepackage[bookmarks=true]{hyperref}

\usepackage{mathptmx}
\usepackage{amsmath}
\usepackage{amssymb}
\usepackage{amsthm}
\usepackage{array}
\usepackage[all,tips]{xy}
\usepackage{enumerate}
\usepackage{graphicx}
\usepackage{lmodern}
\usepackage{mathrsfs}
\usepackage{plain}

\usepackage{float}

\usepackage[toc,page]{appendix} 
\usepackage{chngcntr}
\newtheorem{thm}{Theorem}
\newtheorem*{thm*}{Theorem}
\newtheorem{lem}{Lemma}

\theoremstyle{definition}

\newtheorem{rmk}{Remark}
\newtheorem*{rmk*}{Remark}
\newtheorem{example}{Example}
\newtheorem*{notice*}{Notice}
\newtheorem*{epitome*}{Epitome of growth comparison}

\newtheorem{theorem}{Theorem}
\renewcommand*{\thetheorem}{\Alph{theorem}}


\DeclareMathOperator{\SO}{SO}

\DeclareMathOperator{\Gr}{Gr}

\renewcommand{\qedsymbol}{$\square$}



\def\loba{\loba}

\def\<{\langle}
\def\>{\rangle}

\def\gd{\hbox{$\bullet\kern-4pt$ --\kern-4pt--\kern-4pt-- 
\kern-4pt}}
\def\gr #1{\hbox{$\bullet\kern-4pt $ --\kern-3pt\raise
6pt\hbox{$\scriptstyle #1$}\kern-8pt --\kern-6pt ---\kern-5pt --\kern-1pt}}
\def\grastrich #1{\hbox{$\bullet\kern-2pt $ ---\kern-3pt\raise
8pt\hbox{$\alpha'_#1$}\kern-7pt --\kern-6pt ---\kern-5pt ---\kern+1pt}}
\def\grinf{\hbox{$\bullet\kern-4pt $ --\kern-4pt\raise
6pt\hbox{$\infty$}\kern-11pt --\kern-6pt ---\kern-5pt --}}
\def\grh #1{\hbox{$\bullet\kern-2pt $ ---\kern-3pt\raise
8pt\hbox{$#1$}\kern-7pt --\kern-6pt ---\kern-5pt ---\kern+1pt}}
\def\gra #1{\hbox{$\bullet\kern -2pt$ --\raise
8pt\hbox{$\alpha_{#1}$}\kern-13pt
---\kern-4pt\kern-4pt ---\kern-5pt ---\kern +2pt}}
\def\graa #1{\hbox{$\bullet\kern -2pt$ ---\kern-6pt\raise
8pt\hbox{$\alpha_{#1}$}\kern-14pt
---\kern-4pt\kern-4pt ---\kern-5pt ---\kern +2pt}}
\def\graablack #1{\hbox{$\bullet\kern -2pt$ ---\kern-6pt\raise
8pt\hbox{$\alpha_{#1}$}\kern-14pt
---\kern-4pt\kern-4pt ---\kern-5pt ---\kern +2pt$\bullet$}}
\def\diamondgra #1{\hbox{$\diamond\kern -2pt$ --\raise
8pt\hbox{$\alpha_{#1}$}\kern-15pt
---\kern-4pt\kern-4pt ---\kern-5pt ---\kern +2pt}}
\def\gradiamond #1{\hbox{$\circ\kern -2pt$ --\raise
8pt\hbox{$\alpha_{#1}$}\kern-13pt
---\kern-4pt\kern-4pt ---\kern-5pt ---$\diamond$}}
\def\diamondgr #1{\hbox{$\diamond\kern -2pt$ ---\raise
8pt\hbox{${#1}$}\kern-13pt
---\kern-4pt\kern-4pt ---\kern-5pt ---\kern +2pt}}
\def\grdiamond #1{\hbox{$\circ\kern -2pt$ --\raise
8pt\hbox{${#1}$}\kern-13pt
---\kern-4pt\kern-4pt ---\kern-5pt ---\kern+2pt$\diamond$}}
\def\blackgra #1{\hbox{$\bullet\kern -2pt$ --\raise
8pt\hbox{$\alpha_{#1}$}\kern-13pt
---\kern-4pt\kern-4pt ---\kern-5pt ---\kern +2pt}}
\def\grablack #1{\hbox{$\circ\kern -2pt$ --\raise
8pt\hbox{$\alpha_{#1}$}\kern-13pt
---\kern-4pt\kern-4pt ---\kern-5pt ---\kern +1pt$\bullet$}}
\def\blackgr #1{\hbox{$\bullet\kern-2pt $ ---\kern-3pt\raise
8pt\hbox{$#1$}\kern-7pt --\kern-6pt ---\kern-5pt ---\kern+1pt}}
\def\grblack #1{\hbox{$\circ\kern-2pt $ ---\kern-3pt\raise
8pt\hbox{$#1$}\kern-7pt --\kern-6pt ---\kern-5pt ---\kern+1pt$\bullet$}}
\def\grslash{\hbox{$\bullet$\kern-1pt\raise8pt\hbox{$\diagup$}\raise14pt\hbox{$\bullet$}\raise8pt\hbox{$\diagdown$} \kern-3pt$\bullet$\kern0pt---\kern-3pt---\kern0pt$\bullet$}}
\def\gra3tilde{\hbox{$\bullet$\kern-2pt\raise8pt\hbox{$\diagup$}\kern-1pt\raise14pt\hbox{$\bullet$}\raise8pt\hbox{$\diagdown$} \kern-5pt$\bullet$\kern-28pt---\kern-3pt---\kern-2pt-}}

\def\loba{\hbox{\rm J\kern-1pt I}}

\def\r#1{\hbox{$#1$}\raise-8pt\hbox{${\,\tilde{}}$}}

\begin{document}

\title[\resizebox{4.6in}{!}
{Hyperbolic Coxeter groups and minimal growth rates in dimensions four and five}]{Hyperbolic Coxeter groups and minimal 
growth rates in dimensions four and five}

\author{Naomi Bredon}
\address{Department of Mathematics\\
University of Fribourg\\
CH-1700 Fribourg\\Switzerland}
\email{naomi.bredon@unifr.ch}

\author{Ruth Kellerhals}
\address{Department of Mathematics\\
University of Fribourg\\
CH-1700 Fribourg\\Switzerland}
\email{ruth.kellerhals@unifr.ch}

\thanks{Naomi Bredon and Ruth Kellerhals are partially supported by the Swiss National Science Foundation 200021--172583}


\keywords{Coxeter group, hyperbolic polyhedron, disjoint facets, growth rate.}

\subjclass[2010]{20F55, 26A12 (primary); 22E40, 11R06 (secondary)}

\begin{abstract}    

For small $n$, the known compact hyperbolic $n$-orbifolds 
of minimal volume are intimately related to Coxeter groups of smallest rank. For $n=2$ and $3$, these Coxeter groups are given by the triangle group $[7,3]$ and the tetrahedral group $[3,5,3]$, and they are also distinguished by the 
fact that they have minimal growth rate among {\it all} cocompact hyperbolic Coxeter groups in $\hbox{Isom}\mathbb H^n$, respectively. In this work, we consider the cocompact Coxeter simplex group
$G_4$ with Coxeter symbol $[5,3,3,3]$ in $\hbox{Isom}\mathbb H^4$ and 
the cocompact Coxeter prism group $G_5$ based on $[5,3,3,3,3]$
in $\hbox{Isom}\mathbb H^5$. Both groups are arithmetic and related to the fundamental group of the minimal volume arithmetic compact hyperbolic $n$-orbifold for $n=4$ and $5$, respectively.
Here, we prove that the group $G_n$ is distinguished by having smallest growth rate among all Coxeter groups
acting cocompactly on $\mathbb H^n$ for $n=4$ and $5$, respectively. The proof is based on combinatorial properties of compact hyperbolic Coxeter polyhedra, some partial classification results and certain
monotonicity properties of growth rates of the associated Coxeter groups.

\end{abstract}


\maketitle
{\it In memoriam Ernest B.  Vinberg}
\vskip1cm
\section{Introduction}\label{Intro}
Let $\mathbb H^n$ denote the real hyperbolic $n$-space and $\hbox{Isom}\mathbb H^n$ its isometry group. A hyperbolic Coxeter group $G\subset\hbox{Isom}\mathbb H^n$ of rank $N$ is a cofinite discrete group generated by $N$ reflections with respect to hyperplanes in $\mathbb H^n$. Such a  group corresponds to a finite volume Coxeter polyhedron $P\subset\mathbb H^n$ with $N$ facets, which in turn is a convex polyhedron all of whose dihedral angles are of the form $\pi/k$ for an integer $k\ge2$. Hyperbolic Coxeter groups are geometric realisations of abstract Coxeter systems $(W,S)$
consisting of a group $W$ with a finite set $S$ of generators satisfying the relations $s^2=1$ and $(ss')^{m_{ss'}}=1$ where $\,m_{ss'}=m_{s's}\in\{2,3,\ldots,\infty\}$ for $s\not=s'$. For small rank $N$, the group $W$ is characterised most conveniently by its Coxeter symbol or its Coxeter graph.

Hyperbolic Coxeter groups are not only characterised by a simple presentation but they are also distinguished in other ways. For example, for small $n$, they appear as fundamental groups of smallest volume orbifolds $O^n=\mathbb H^n/\Gamma$ where $\Gamma\subset\hbox{Isom}\mathbb H^n$ is a discrete subgroup (see 
\cite{Belo1,Belo2}, \cite{Hild}, \cite{Martin}, \cite{EK} and \cite{K}, for example).
In particular, for $n=2$ and $3$, the compact hyperbolic $n$-orbifold of minimal volume is the quotient of $\mathbb H^n$ 
by a Coxeter group of smallest rank and given by the triangle group
$[7,3]$ and
the $\mathbb Z_2$-extension of the tetrahedral group $[3,5,3]$. For $n=4$ and $5$, and by restricting to the arithmetic context, the compact hyperbolic $n$-orbifold of minimal volume is the
quotient of $\mathbb H^n$ by the $4$-simplex group $[5,3,3,3]$
and by the Coxeter $5$-prism group based on $[5,3,3,3,3]$, respectively.

In parallel to volume we are interested in the spectrum of small growth rates of hyperbolic Coxeter groups $G=(W,S)$. In general, the growth series $f_S(t)$ of a Coxeter system $(W,S)$ is 
given by 
\[
f_S(t)=1+\sum\limits_{k\ge1}a_kt^k\,\,,
\]
where $a_k\in\mathbb Z$ is the number of elements $w\in W$ with $S$-length $k$. The series $f_S(t)$ 
can be computed by Steinberg's formula
\begin{equation*}
\frac{1}{f_S(t^ {-1})}=\sum\limits_{{W_T<W\atop\scriptscriptstyle{{\vert W_T\vert<\infty}}}}\,
\frac{(-1)^{\vert T\vert}}{f_T(t)}\,,
\end{equation*}
where $W_T\,,\,T\subset S\,,$ is a finite Coxeter subgroup of $W$, and where $W_{\varnothing}=\{1\}$. In particular, $f_S(t)$ is a rational function 
that can be expressed as the quotient of coprime monic polynomials $p(t), q(t)\in\mathbb Z[t]$ of equal degree. 
For cocompact hyperbolic Coxeter groups, the series $f_S(t)$ is infinite and has radius of convergence $R<1$ which can be identified with the real algebraic integer given by the smallest positive root of the denominator polynomial $q(t)$. The growth rate 
$\tau_G=\tau_{(W,S)}$ is defined by
\begin{equation*}
\tau_G=\limsup\limits_{k\rightarrow\infty}\sqrt[\leftroot{0}\uproot{2}k]{a_k}\,\,,
\end{equation*}
and $\tau_G$ coincides with the inverse of the radius of convergence $R$ of $f_S(t)$. In contrast to the finite and affine cases, hyperbolic Coxeter groups are of exponential growth.

In \cite{Hironaka} and \cite{KK}, it is shown that the triangle group $[7,3]$ and the tetrahedral group $[3,5,3]$ have minimal growth rate among {\it all} cocompact hyperbolic Coxeter groups in $\hbox{Isom}\mathbb H^n$ for $n=2$ and $3$, respectively.
These results have an interesting number theoretical component since the growth rate $\tau$ of any Coxeter group acting cocompactly on $\mathbb H^n\,$ for $n=2$ and $3$ is either a quadratic unit or a Salem number, that is, $\tau$ is a real algebraic integer $\alpha>1$ whose inverse
is a conjugate of $\alpha$, and all other conjugates lie on the unit circle. 
In particular, the growth rate $\tau_{[7,3]}$ equals
the smallest known Salem number, and it is given by Lehmer's number
$\alpha_L\approx1.17628$ with minimal polynomial 
$L(t)=t^{10}+t^9-t^7-t^6-t^5-t^4-t^3+t+1$. The constant $\alpha_L$ plays an important role in the strong version of Lehmer's problem  about a universal lower bound for Mahler measures of nonzero noncyclotomic irreducible integer polynomials; see \cite{Smyth}.

The proof in \cite{KK} of the two results above is based on the fact that for $n=2$ and $3$ the rational function $f_S(t)$ comes with an explicit formula in terms of the exponents of the Coxeter group $G=(W,S)\subset\hbox{Isom}\mathbb H^n$. 

For dimensions $n\ge4$, however, there are only a few structural results, and closed formulas for growth functions do not exist in general.
In this work, we establish the following results for $n=4$ and $5$ by developing a new proof strategy.
\begin{theorem}\label{thm:A}
Among all Coxeter groups acting cocompactly on $\mathbb H^4$, the 
Coxeter simplex group $[5,3,3,3]$ has minimal growth rate, and as such it is unique.
\end{theorem}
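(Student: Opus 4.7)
The plan is to fix a cocompact Coxeter group $G=(W,S) \subset \hbox{Isom}\,\mathbb{H}^4$ with fundamental compact Coxeter polyhedron $P$ of $N$ facets, and to prove $\tau_{G_4}<\tau_G$ unless $G=G_4$. Since Vinberg's theory forces $N\geq 5$, I would handle the three ranges $N=5$, $N=6$, and $N\geq 7$ separately, relying on enumeration where a classification is available and on a combinatorial-monotonicity argument otherwise.

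For $N=5$, the polyhedron $P$ is a simplex, and Lann\'er's classification yields exactly five compact Coxeter $4$-simplices in $\mathbb{H}^4$. Applying Steinberg's formula to the finite (spherical) subgroups $W_T$ of each simplex group produces the rational function $f_S(t)$ explicitly; locating the smallest positive real root of its denominator $q(t)$ and comparing numerically across the five candidates gives $\tau_{G_4}<\tau_G$ for the four remaining groups. For $N=6$, Esselmann's classification of compact hyperbolic Coxeter $n$-polyhedra with $n+2$ facets provides a short list of possibilities in $\mathbb{H}^4$, and the same Steinberg-based computation settles that case.

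For $N\geq 7$ no classification is available, so I would proceed structurally. Since $P$ is simple of dimension four, each vertex link is a spherical Coxeter $3$-simplex, which severely restricts the local configurations of dihedral angles. Combined with the combinatorial results on disjoint (ultraparallel) facets flagged in the abstract, this forces the Coxeter graph of $G$ to contain multiple edges of label $\infty$ and to carry a rigid Coxeter subsystem $(W_T,T)$, $T\subset S$, of known type. The standard monotonicity $\tau_{(W_T,T)}\leq \tau_{(W,S)}$ transfers any explicit lower bound on $\tau_{W_T}$ to $\tau_G$. I would choose $T$ so that $\tau_{W_T}$ is explicitly computable and already exceeds $\tau_{G_4}$, completing the argument; a complementary monotonicity in the edge labels $m_{ss'}$ allows the analysis to be reduced to a finite list of candidate subgraphs.

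The main obstacle is precisely this third case. Producing a lower bound that is simultaneously \emph{robust enough} to cover every compact Coxeter $4$-polyhedron with at least seven facets and \emph{sharp enough} to beat $\tau_{G_4}$ is delicate: the combinatorial input from simplicity of $P$ together with the disjoint-facet structure must be combined with the subsystem monotonicity of growth rates in a carefully chosen way, and the verification that the resulting subsystem Coxeter group has the requisite growth rate is the core technical step.
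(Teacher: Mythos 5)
Your overall architecture (enumerate for few facets, use subsystem monotonicity for many facets) matches the paper's, and your cases $N=5$ and $N=6$ are sound since those classifications are finite and explicit. But your third case contains a genuine gap, which you yourself flag as ``the core technical step'': you never identify a concrete standard subsystem $(W_T,T)$ whose growth rate you can compute and compare against $\tau_{[5,3,3,3]}$. The resolution is much simpler than the ``delicate'' balancing you anticipate. By Felikson--Tumarkin, any compact Coxeter polyhedron in $\mathbb H^4$ that is not a simplex or one of the seven Esselmann polyhedra has a pair of disjoint facets, so its Coxeter graph contains an edge of weight $\infty$; since the graph is connected of order $\ge 6$, that edge extends to a connected order-$3$ subgraph $\sigma$ with $[\infty,3]\le\sigma$ in the partial order on Coxeter systems. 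Terragni's monotonicity then gives $\tau_{[\infty,3]}\le\tau_\sigma\le\tau_G$, and the single numerical fact $\tau_{[5,3,3,3]}\approx 1.19988 < \tau_{[\infty,3]}\approx 1.32471$ (the smallest Pisot number, with minimal polynomial $t^3-t-1$) closes every non-enumerated case at once. No analysis of vertex links, no ``multiple edges of label $\infty$'' (which in fact need not occur for $N=7$, where exactly one disjoint pair is possible), and no finite list of candidate subgraphs is needed.

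A secondary remark: the paper's case split is by the Felikson--Tumarkin dichotomy (all facets mutually intersecting versus not) rather than by $N$, which is why the argument is uniform for every $N\ge 6$ once a disjoint pair exists. For the seven Esselmann polyhedra (where all facets do intersect) the paper also avoids direct computation, observing that each contains a hyperbolic triangle subgroup of type at least $[8,3]$ and that $\tau_{[5,3,3,3]}<1.2<\tau_{[8,3]}\approx 1.23039$. Your proposal to compute all $N=6$ cases directly would also work, but the subgraph argument is what makes the unbounded-$N$ case tractable, and supplying it explicitly is what your write-up is missing.
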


The cocompact Coxeter prism group based on $[5,3,3,3,3]$ in $\hbox{Isom}\mathbb H^5$ was first discovered by Makarov \cite{Makarov} and arises as the discrete group generated by the reflections in the compact straight Coxeter prism $M$ with base $[5,3,3,3]$. More concretely, the prism $M$ is the truncation of the (infinite volume) Coxeter $5$-simplex $[5,3,3,3,3]$ by means of the polar hyperplane associated to its ultra-ideal vertex characterised by the vertex simplex $[5,3,3,3]$. Our second result can be stated as follows.

\begin{theorem}\label{thm:B}
Among all Coxeter groups acting cocompactly on $\mathbb H^5$, the 
Coxeter prism group based on $[5,3,3,3,3]$ has minimal growth rate, and as such it is unique.
\end{theorem}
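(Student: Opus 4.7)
My plan adapts the strategy of Theorem~\ref{thm:A} to dimension five, where the classification of compact Coxeter polyhedra is considerably less developed. Let $G\subset\hbox{Isom}\,\mathbb H^5$ be a cocompact Coxeter group with Coxeter polyhedron $P\subset\mathbb H^5$ having $N$ facets. Since $\mathbb H^5$ carries no compact Coxeter simplex, every such $P$ satisfies $N\ge 7$, which naturally splits the argument into the cases $N=7$ and $N\ge 8$.

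For $N=7$ I would invoke Tumarkin's classification of compact hyperbolic Coxeter $n$-polyhedra with $n+2$ facets, specialised to $n=5$. The resulting list is finite and explicit: all such polyhedra are (generalised) straight prisms, and the Makarov prism underlying $G_5$ is among them. For each entry I would apply Steinberg's formula, factor the denominator polynomial $q(t)\in\mathbb Z[t]$ of $f_S(t)$, and compute its smallest positive real root, for example in \textsc{magma}. Since this is a finite check, it yields both the minimality of $\tau_{G_5}$ and its uniqueness within this family.

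For $N\ge 8$ the plan is to exploit the monotonicity principle that $\tau_{W_T}\le\tau_W$ whenever $T\subset S$, together with the observation that every vertex figure of a compact hyperbolic Coxeter $5$-polyhedron is a spherical Coxeter simplex of rank $5$, which sharply restricts the rank-$6$ standard parabolic sub-diagrams occurring inside the Coxeter graph of $G$. Combinatorial input of Felikson--Tumarkin type on configurations of pairwise disjoint facets should provide additional rigidity by bounding the number of dotted edges. The goal is then to exhibit, in any such $G$, a proper subsystem $(W_T,T)$ whose growth rate is already at least $\tau_{G_5}$, and to verify that the monotone inequality $\tau_{W_T}\le\tau_G$ is strict because the diagram inclusion is proper.

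The main obstacle I anticipate is precisely this last step: without a classification for $N\ge 8$ the argument must be uniform across all admissible combinatorial types of $P$, and identifying a single controlling sub-diagram in full generality is delicate. Should no uniform sub-diagram suffice, a natural fallback is to work term-by-term with Steinberg's formula on the interval $(0,1/\tau_{G_5})$, bounding the finite parabolic contributions via the spherical vertex-figure condition and the disjoint-facet restrictions to force the smallest positive root of the denominator polynomial of $f_S(t)$ to lie strictly below $1/\tau_{G_5}$.
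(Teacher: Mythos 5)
Your skeleton matches the paper's: rule out $N=6$ (no compact simplices in $\mathbb H^5$), settle $N=7$ by a finite explicit check against the $n+2$-facet classification (which, by the way, is due to Esselmann and Kaplinskaja, not Tumarkin — Tumarkin classified the $n+3$-facet case), and handle larger $N$ by growth monotonicity on subsystems. But the step you yourself flag as the main obstacle — exhibiting a uniform controlling sub-diagram for all $N\ge 8$ — is exactly the content of the paper's proof, and your proposal does not supply it. The paper's resolution has three concrete ingredients you are missing. First, a result of Felikson--Tumarkin guarantees that any compact Coxeter polyhedron in $\mathbb H^n$ with $N\ge n+4$ facets has at least \emph{two} pairs of non-intersecting facets, and the signature condition on the Gram matrix forces two dotted edges to be separated by an ordinary edge; this produces, inside the Coxeter graph, a connected order-$4$ subgraph dominating one of three explicit rank-$4$ systems: the path $[\infty,3,\infty]$ (the Lambert quadrilateral group $Q$), the path $[\infty,\infty,3]$, or the star with two $\infty$-edges and one $3$-edge. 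Second, explicit Steinberg computations (Lemmas \ref{compare1} and \ref{compare2}) show each of these, plus $[4,\infty,4]$, has growth rate at least $\tau_Q\approx 1.72208$, which exceeds $\tau_M\approx 1.64759$; the case $N=8$ is then finished via Tumarkin's $n+3$ list (all entries but one contain $\grinf\grinf\grinf\bullet$, and the exceptional polyhedron $T$ contains $[4,\infty,4]$). Your appeal to the spherical vertex-figure condition restricts the \emph{finite} parabolic subdiagrams but does not produce the $\infty$-labelled subdiagrams that actually drive the bound.

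A second, smaller but genuine error: you propose to get strictness of $\tau_{W_T}\le\tau_G$ ``because the diagram inclusion is proper.'' Terragni's monotonicity (Theorem \ref{Terragni}) gives only the weak inequality for $(W,S)\le(W',S')$, proper or not, and indeed distinct Coxeter systems can share a growth rate. The strict inequality in the paper comes from the purely numerical gap $\tau_M<\tau_Q$, i.e.\ the chain $\tau_M<\tau_{G_1}\le\tau_\sigma\le\tau_G$, not from properness of the subsystem. Without identifying the specific comparison groups and verifying $\tau_M<\tau_{G_1}$, the argument for $N\ge 8$ does not close; your fallback of term-by-term estimation of Steinberg's formula over all admissible combinatorial types is not carried out and would be substantially harder than the route the paper takes.
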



\vspace{2mm}
The work is organised as follows. In Section \ref{section2-1} we provide the necessary background about hyperbolic Coxeter polyhedra, their reflections groups and the characterisation by means of the Vinberg graph and the Gram matrix. We present the relevant classification results for families of Coxeter polyhedra with few facets due to Esselmann, Kaplinskaja and Tumarkin. Of particular importance is the structural result, presented in Theorem \ref{1-dotted} and due Felikson and Tumarkin, about the existence of non-intersecting facets of a compact Coxeter polyhedron.
In Section \ref{section2-2}, we consider abstract Coxeter systems with their Coxeter graphs and Coxeter symbols and introduce the notions of growth series and growth rates.  Another important ingredient is the growth monotonicity result of Terragni as given in Theorem \ref{Terragni}.
The proofs of our results are presented in Section \ref{section3}.
The proof of Theorem \ref{thm:A} is based on a simple growth rate comparison argument and serves as an inspiration how to attack the proof of Theorem \ref{thm:B}. To this end, we establish Lemma \ref{compare1} and Lemma \ref{compare2} about the comparison of growth rates of certain Coxeter groups of rank $4$.
Then, we consider compact Coxeter polyhedra in $\mathbb H^5$ in terms of the number $N\ge6$ of their facets.
Since compact hyperbolic Coxeter $n$-simplices exist only for $n\le4$, we look at compact Coxeter polyhedra $P\subset\mathbb H^5$
with $N=7$, $N=8$ and $N\ge9$ facets, respectively. Certain classification results help us dealing with the cases $N=7$ and $8$ while for $N\ge9$, we look for particular subgraphs in the Coxeter graph of $P$ and conclude by means of Lemma \ref{compare1}, Lemma \ref{compare2} and Theorem \ref{Terragni}.

\vspace{10pt}
\noindent
{\em Acknowledgement.\quad}The authors would like to thank Yohei Komori for helpful comments on an earlier version of the paper.
\section{Hyperbolic Coxeter polyhedra and growth rates}\label{section2}
\subsection{Hyperbolic Coxeter polyhedra and their reflection groups}\label{section2-1}
Denote by $\mathbb H^n$ the standard hyperbolic $n$-space realised by the upper sheet of the hyperboloid in $\mathbb R^{n+1}$ according to 
\[
\mathbb H^n=\{x\in\mathbb R^{n+1}\mid q_{n,1}(x)=x_1^2+\dots+x_n^2-x_{n+1}^2=-1\,,\,x_{n+1}>0\}\,.
\]
A hyperbolic hyperplane $H$ is the intersection of a vector subspace of dimension $n$ with $\mathbb H^n$ and 
can be represented as the Lorentz-orthogonal complement $H=e^L$ by means of a vector $e$ of (normalised) Lorentzian norm $q_{n,1}(e)=1$.
The isometry group $\hbox{Isom}\mathbb H^n$ of $\mathbb H^n$ is given by the group $\mathrm{O}^+(n,1)$ of positive Lorentzian matrices
leaving the bilinear form $\langle x,y\rangle_{n,1}$ associated to $q_{n,1}$ and the upper sheet invariant. It is well known that $\mathrm{O}^+(n,1)$
is generated by linear reflections $r=r_H\,:\,x\mapsto x-2\,\langle e,x\rangle_{n,1}\,e\,$ with respect to hyperplanes 
$H=e^L$ (see \cite[Section A.2]{Ben-Petr}).

A hyperbolic $n$-polyhedron $P\subset \mathbb H^n$ is the non-empty intersection of a finite number $N\ge n+1$ of half-spaces $H_i^-$ bounded by hyperplanes
$H_i$ all of whose normal unit vectors $e_i$ are directed outwards with respect to $P$, say. A facet of $P$ is the intersection of $P$ with
one of the hyperplanes $H_i\,,\,1\le i\le N$.
A polyhedron is a \emph{Coxeter polyhedron} if all of its dihedral angles are of the form $\frac{\pi}{k}$ for an integer $k\ge2$. 

In this work, we suppose that $P$ is a \emph{compact} hyperbolic Coxeter polyhedron so that
$P$ is the convex hull of finitely many points in $\mathbb H^n$. 
In particular, $P$ is \emph{simple} since all dihedral angles of $P$ are less than or equal to $\pi/2$. 
As a consequence,
each vertex $p$ of $P$ is the intersection of $n$ hyperplanes bounding $P$ and characterised by a vertex neighborhood 
which is a cone over a spherical Coxeter $(n-1)$-simplex. 

The following structural result of A. Felikson and P. Tumarkin \cite[Theorem A]{FT1} 
will be of importance later in this work.   
For its statement, the compact Coxeter polyhedra in $\mathbb H^4$ that are products of two simplices of dimensions greater than $1$ will play a certain role. There are seven such polyhedra which were discovered by F.  Esselmann \cite{Essel} (see also \cite{F-web} and Examples \ref{n+2}, \ref{Esselmann} and \ref{ex:tau-Esselmann}).

\begin{thm}\label{1-dotted}
Let $P\subset\mathbb H^n$ be a compact Coxeter polyhedron. If $n\le 4$ and all facets of $P$ are mutually intersecting, then $P$ is either a simplex or one of the seven Esselmann polyhedra. If $n>4$, then $P$ has a pair of non-intersecting facets.
\end{thm}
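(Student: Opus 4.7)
The plan is to combine the combinatorial constraint that every pair of facets of $P$ shares a codimension-$2$ face (a consequence of simplicity plus mutual intersection) with the dimension-dependent classifications of small-rank compact Coxeter polyhedra. Write $G(P)$ for the Gram matrix of the outward unit normals $e_1,\dots,e_N$: it has signature $(n,1)$, and the mutual-intersection hypothesis says that every off-diagonal entry lies in $(-1,0]$, i.e., no edge of the Vinberg graph is dotted. Since $P$ is compact and simple, each vertex of $P$ corresponds to an $n\times n$ positive-definite principal submatrix of $G(P)$, namely the Gram matrix of a spherical Coxeter $(n-1)$-simplex.

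For the cases $n\le 4$ I would proceed by explicit classification. For $n=2$ a convex polygon in which every two edges meet is a triangle. For $n=3$, Euler's formula for a simple polytope yields $E=3N-6$ codimension-$2$ faces; mutual adjacency of facets forces $\binom{N}{2}=3N-6$, so $N=4$, and $P$ is a Lann\'er tetrahedron. For $n=4$, the case $N=5$ recovers the compact Lann\'er $4$-simplices. The case $N=6$ is pinned down combinatorially by the observation that each facet of $P$ must be a simple $3$-polytope with exactly $N-1=5$ sub-facets, hence combinatorially a triangular prism, forcing the combinatorial type $\Delta^2\times\Delta^2$; Esselmann's classification \cite{Essel} then yields precisely the seven Esselmann polyhedra. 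For $N\ge 7$ in $\mathbb{H}^4$, one invokes the non-existence results of Esselmann and Tumarkin for compact Coxeter $4$-polytopes of higher rank with all facets mutually intersecting, which come from listing the short supply of admissible rank-$4$ spherical vertex links against the signature $(4,1)$ constraint on $G(P)$.

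For the main case $n\ge 5$ I would attempt an inductive reduction: pick a facet $F$ of $P$, regard it as a compact Coxeter $(n-1)$-polyhedron whose facets are the intersections $F\cap F_i$ with the facets $F_i$ of $P$ adjacent to $F$, and try to apply the induction hypothesis. The subtle point is that pairwise adjacency of the facets of $P$ does not automatically imply pairwise adjacency of sub-facets of $F$, since three facets $\{F,F_i,F_j\}$ of $P$ being pairwise adjacent does not force the triple intersection $F\cap F_i\cap F_j$ to be non-empty. One must therefore separately control the rank-$3$ elliptic subdiagrams of the Vinberg graph that appear as vertex links of $F$, and show that, under the hypothesis on $P$, these triple intersections are all forced to be non-empty. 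Granting this, $F$ inherits the mutual-intersection property and is by induction either an $(n-1)$-simplex (with $n$ sub-facets) or, only if $n-1=4$, an Esselmann polyhedron (with $6$ sub-facets). Counting then gives either $N=n+1$, so $P$ is a compact hyperbolic Coxeter $n$-simplex (impossible by Lann\'er for $n\ge 5$), or $n=5$ and $N=7$, with every facet of $P$ Esselmann.

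The main obstacle is the final case $n=5$, $N=7$ with every facet combinatorially a $\Delta^2\times\Delta^2$-Esselmann polyhedron. Here I would perform a direct Gram-matrix analysis on the $7\times 7$ matrix $G(P)$: its signature must be $(5,1)$, all off-diagonal entries must lie in $[-1,0]$, and the rigid combinatorics of seven Esselmann facets glued along $\binom{7}{2}=21$ codimension-$2$ faces severely restricts the admissible Coxeter labels on the edges of the Vinberg graph. A finite case analysis, constrained simultaneously by Esselmann's explicit list of Coxeter realisations of $\Delta^2\times\Delta^2$ and by the spherical rank-$5$ vertex-link condition, is expected to exhaust all possibilities and yield the contradiction that completes the proof.
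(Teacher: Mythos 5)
This theorem is not proved in the paper at all: it is quoted verbatim from Felikson and Tumarkin \cite{FT1} (their Theorem~A), so there is no internal argument to compare yours against, and what you are really attempting is a from-scratch reproof of a substantial classification result. Your treatment of the small cases is sound: $n=2$ and the Euler-formula count for $n=3$ are correct, and for $n=4$ with $N=5,6$ the reduction to Lann\'er simplices and to the combinatorial type $\Delta^2\times\Delta^2$ (hence to Esselmann's list \cite{Essel}) is the standard route. But the two cases that carry all the weight are not closed. First, for $n=4$ and $N\ge 7$ there is no classification you can invoke: Tumarkin's list \cite{Tum} covers only $N=n+3=7$, and for $N\ge 8$ the non-existence of compact Coxeter $4$-polytopes with all facets mutually intersecting is precisely (part of) the content of \cite{FT1}, established there by a lengthy analysis of missing faces and Lann\'er subdiagrams of the Coxeter diagram, not by ``listing the short supply of admissible rank-$4$ spherical vertex links.''

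Second, and more seriously, the inductive step for $n\ge 5$ rests on the inheritance claim that every facet $F$ of $P$ again has all of \emph{its} facets mutually intersecting, which you flag as ``the subtle point'' and then simply grant. This claim is not merely unproven; it fails one dimension down in exactly the situation your induction must handle. An Esselmann polytope in $\mathbb H^4$ has all six facets mutually intersecting, yet each facet is a triangular prism, whose two triangular facets are disjoint. So pairwise intersection of $\{F,F_i,F_j\}$ genuinely does not force $F\cap F_i\cap F_j\neq\varnothing$, and there is no reason to expect the triple intersections to be nonempty for a facet of a $5$-polytope either. Without this, the induction gives no control over $F$ and the argument collapses at its first step; this is why Felikson and Tumarkin do not argue by facet induction but work directly with the Gram matrix and the combinatorics of non-elliptic subdiagrams. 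Finally, the terminal configuration ($n=5$, $N=7$, all facets Esselmann) is only asserted to be ``expected'' to lead to a contradiction, which is a statement of intent rather than a proof. In short: the skeleton is reasonable for $N\le n+2$, but the three load-bearing steps are all gaps.
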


Fix a compact Coxeter polyhedron $P\subset\mathbb H^n$ with its bounding hyperplanes $H_1,\ldots,H_N$ as above.
Denote by $G$ the group generated by the reflections $r_i=r_{H_i}\,,\,1\le i\le N$. Then, $G$ is a cocompact discrete subgroup of $\hbox{Isom}\mathbb H^n$
with $P$ equal to the closure of a fundamental domain for $G$. The group $G$ is called a \emph{(cocompact) hyperbolic Coxeter group}.
It follows that $G$ is finitely presented with natural generating set $S=\{r_1,\ldots,r_N\}$ and relations
\begin{equation}\label{refl-relations}
 r_i^2=1\quad\hbox{and}\quad(r_ir_j)^{m_{ij}}=1\,\,,
\end{equation}
where $m_{ij}=m_{ji}\in\{2,3,\ldots,\infty\}$ for $\,i\not= j$. Here, $m_{ij}=\infty$ means that the product $r_ir_j$ is of 
infinite order which fits into the following geometric picture. Denote by 
$\Gr(P)=\big(\langle e_i,e_j\rangle_{n,1}\big)\in\hbox{Mat}(N;\mathbb R)$ the Gram matrix of $P$.  Then, the coefficients 
of $\Gr(P)$ off its diagonal can be interpreted as follows.
\begin{equation}
\label{eq:Gram-coeff}
-\langle e_i,e_j\rangle_{n,1}=\left\{
\begin{array}{ll}
\cos\frac{\pi}{m_{ij}}&\textrm{if $\measuredangle(H_i,H_j)=\frac{\pi}{m_{ij}}\,$};\\
\cosh l_{ij}&\textrm{if $d_{\mathbb H}(H_i,H_j)=l_{ij}>0\,$}.\\
\end{array}
\right.
\end{equation}
The matrix $\Gr(P)$ is of signature $(n,1)$. Furthermore, it contains important information about $P$. For example, each vertex of $P$ is 
characterised by a positive definite $n\times n$ principal submatrix of $\Gr(P)$.

Beside the Gram matrix $\Gr(P)$, the Vinberg graph $\Sigma(P)$ is very useful to describe a Coxeter polyhedron $P$ (and its associated reflection group $G$)
if the number $N$ of its facets is small
in comparison with the dimension $n$.  The Vinberg graph $\Sigma(P)$ consists of nodes $v_i\,,\,1\le i\le N,$ which correspond to the hyperplanes 
$H_i$ or their unit normal vectors $e_i$. The number $N$ of nodes is called the \emph{order} of $\Sigma(P)$.
If the hyperplanes $H_i$ and $H_j$ are not orthogonal, the corresponding 
nodes $v_i$ and $v_j$ are connected by an edge with weight $m_{ij}\ge3$ if $\measuredangle(H_i,H_j)=\frac{\pi}{m_{ij}}\,$; they are connected 
by a dotted edge (sometimes with weight $l_{ij}$) if $H_i$ and $H_j$ are at distance $l_{ij}>0$ in $\mathbb H^n$.
The weight $m_{ij}=3$ is omitted since it occurs very frequently.

Since $P$ is compact (and hence of finite volume), the Vinberg graph $\Sigma(P)$ is connected. Furthermore, by deleting a node together with the edges emanating from it so that $\Sigma(P)$ gives rise to two connected components $\Sigma_1$ and $\Sigma_2$, at most one of two subgraphs $\Sigma_1,\Sigma_2$ can have a dotted edge (since otherwise, the signature condition of $\Gr(P)$ is violated).



The subsequent examples summarise the classification results for compact Coxeter $n$-polyhedra in terms of the number $N=n+k\,,\,1\le k\le 3\,,$ of their facets.
\begin{example}\label{ex1}
The compact hyperbolic Coxeter simplices were classified by Lann\'er \cite{Lanner} and exist for $n\le 4$, only. In the case $n=4$, there are precisely five simplices $L_i$ whose 
Vinberg graphs $\Sigma_i=\Sigma(L_i)\,,\,1\le i\le 5\,,$ are given in Figure \ref{fig:cox-simplex}. The simplex $L=L_1$ described by the top left 
Vinberg graph (or by its Coxeter symbol $[5,3,3,3]$; see Section \ref{section2-2} and \cite{JKRT}) will
be of particular importance.
\end{example}
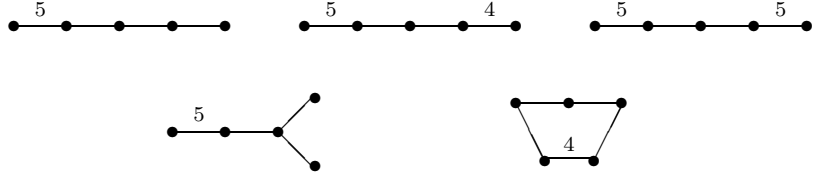
\begin{figure}
\centering
\begin{picture}(300,60)
    \put(0,45){\circle*{3.5}}
    \put(20,45){\circle*{3.5}}
    \put(40,45){\circle*{3.5}}   
    \put(60,45){\circle*{3.5}} 
    \put(80,45){\circle*{3.5}}

    \put(110,45){\circle*{3.5}}
    \put(130,45){\circle*{3.5}}
    \put(150,45){\circle*{3.5}}   
    \put(170,45){\circle*{3.5}} 
    \put(190,45){\circle*{3.5}}
       
    \put(220,45){\circle*{3.5}}
    \put(240,45){\circle*{3.5}}
    \put(260,45){\circle*{3.5}}   
    \put(280,45){\circle*{3.5}} 
    \put(300,45){\circle*{3.5}}
    
    \put(0,45){\line(1,0){80}}
    \put(110,45){\line(1,0){80}}
    \put(220,45){\line(1,0){80}}

    \put(8,49){$\scriptstyle 5$}
    \put(118,49){$\scriptstyle 5$} 
    \put(178,49){$\scriptstyle 4$}  
    \put(228,49){$\scriptstyle 5$}   
    \put(288,49){$\scriptstyle 5$}
    
    \put(60,5){\circle*{3.5}}
    \put(80,5){\circle*{3.5}}
    \put(100,5){\circle*{3.5}}   
    \put(114,18){\circle*{3.5}} 
    \put(114,-8){\circle*{3.5}} 
    
    \put(60,5){\line(1,0){40}}
    \put(100,5){\line(1,1){14}}
    \put(100,5){\line(1,-1){14}}
    
    \put(68,9){$\scriptstyle 5$}
    \put(208,-2){$\scriptstyle 4$} 
    
    \put(190,16){\circle*{3.5}}     
    \put(210,16){\circle*{3.5}}
    \put(230,16){\circle*{3.5}}
    \put(201,-6){\circle*{3.5}}   
    \put(219.5,-6){\circle*{3.5}} 
    
    \put(190,16){\line(1,0){40}}
    \put(200,-5){\line(1,0){20}}
    \put(190,16){\line(1,-2){11}} 
    \put(220,-5){\line(1,2){11}}  
    
\end{picture}
\bigskip
\bigskip

\caption{The compact Coxeter simplices in $\mathbb H^4$}
\label{fig:cox-simplex}
\end{figure}
\begin{example}\label{n+2}
The compact Coxeter polyhedra with $n+2$ facets in $\mathbb H^n$ have been classified. The list consists of the 7 examples of Esselmann
and the (glueings of) straight Coxeter prisms due to I. Kaplinskaja (see \cite{F-web}, \cite{Perren}, for example). The examples of Esselmann are products of two simplices of dimensions bigger than 1 and exist in $\mathbb H^4$, only.  The prisms (and their glueings) of Kaplinskaja exist for $n\le 5$, and the list includes the Makarov prism $M$ based on $[5,3,3,3,3]$ (see Theorem \ref{thm:B}). Observe that the Vinberg graphs of all Kaplinskaja examples (including their glueings) contain one dotted edge.
\begin{figure}[H]
\centering
\begin{picture}(208,40)
    \put(0,0){\circle*{3.5}}
    \put(0,30){\circle*{3.5}}
    \put(16,15){\circle*{3.5}}   
    \put(41,15){\circle*{3.5}} 
    \put(56,0){\circle*{3.5}}    
    \put(56,30){\circle*{3.5}} 
            
    \put(17,15){\line(1,0){23}}
    \put(0,0){\line(0,1){30}}
    \put(56,0){\line(0,1){30}}  
    \put(0,0){\line(1,1){16}}
    \put(0,30){\line(1,-1){16}} 
    \put(40,15){\line(1,1){16}}
    \put(40,15){\line(1,-1){16}}   
    
    \put(27,18){$\scriptstyle 4$}
    \put(-8,12){$\scriptstyle 4$}
    \put(58,12){$\scriptstyle 4$}

    \put(120,0){\circle*{3.5}}
    \put(120,30){\circle*{3.5}} 
    \put(152,0){\circle*{3.5}} 
    \put(152,30){\circle*{3.5}} 
    \put(168,15){\circle*{3.5}} 
    \put(188,15){\circle*{3.5}}      
    \put(208,15){\circle*{3.5}}

     \put(120,0){\line(1,0){32}}
     \put(120,30){\line(1,0){32}}
     \put(120,0){\line(0,1){30}}      
     \put(152,30){\line(1,-1){16}} 
     \put(152,0){\line(1,1){16}}
     \put(168,15){\line(1,0){20}}
     \put(192,12.5){$\cdots$}
     \put(112,12){$\scriptstyle 4$}
     
     \put(25,-10){$E$}
     \put(173,-10){$K$}

\end{picture}
\vskip.8cm

\caption{The Vinberg graphs of an Esselmann polyhedron $E\subset \mathbb H^4$
 and of a Kaplinskaja prism $K\subset\mathbb H^5$
}
\label{fig:n+2}
\end{figure}
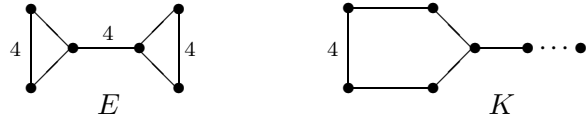
\end{example}

\begin{example}\label{n+3}
The compact hyperbolic Coxeter polyhedra $P\subset\mathbb H^n\,,\,n\ge4\,,$ with $n+3$ facets exist up to $n=8$ and have been enumerated by Tumarkin \cite{Tum}. 
For $n=5$, his list comprises 16 polyhedra, and they are characterised by Vinberg graphs with exactly 3 (consecutive) dotted edges, up to the 
exceptional case $T\subset \mathbb H^5$. The polyhedron $T$ has exactly one pair of non-intersecting facets and is depicted in Figure \ref{fig:Tumarkin}.
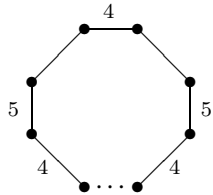
\begin{figure}[H]
\centering
\begin{picture}(60,70)
    \put(0,20){\circle*{3.5}}
    \put(0,40){\circle*{3.5}}
    \put(20,0){\circle*{3.5}}   
    \put(20,60){\circle*{3.5}} 
    \put(40,0){\circle*{3.5}}    
    \put(40,60){\circle*{3.5}} 
    \put(60,20){\circle*{3.5}}    
    \put(60,40){\circle*{3.5}} 
            
    \put(20,60){\line(1,0){20}}
    \put(24,-2.5){$\cdots$}   
    \put(0,20){\line(0,1){20}}  
    \put(60,20){\line(0,1){20}}
    \put(0,40){\line(1,1){20}} 
    \put(0,20){\line(1,-1){20}}
    \put(40,0){\line(1,1){20}}   
    \put(40,60){\line(1,-1){20}}  
    
    \put(27,64){$\scriptstyle 4$}
    \put(-9,27){$\scriptstyle 5$}
    \put(64,27){$\scriptstyle 5$}   
    \put(2,5){$\scriptstyle 4$}
    \put(52,5){$\scriptstyle 4$}
                
\end{picture}
\vskip.8cm

\caption{The Vinberg graph of Tumarkin's polyhedron $T\subset \mathbb H^5$ with one pair of disjoint facets
}
\label{fig:Tumarkin}
\end{figure}
\end{example}

\setcounter{rmk}{0}
\begin{rmk}\label{one-dotted}
By a result of Felikson and Tumarkin \cite[Corollary]{FT2}, the Coxeter polyhedra in Examples \ref{ex1}, \ref{n+2} and \ref{n+3} contain {\em all} compact Coxeter polyhedra with exactly one pair of non-intersecting facets. In particular, each compact Coxeter polyhedron $P\subset\mathbb H^n$ with $N\ge n+4$ facets has a Vinberg graph with at least 2 dotted edges.
\end{rmk}

Every compact Coxeter polyhedron $P\subset\mathbb H^n$ gives rise to a hyperbolic Coxeter group acting cocompactly on $\mathbb H^n$, and each cocompact discrete group $G\subset\hbox{Isom}\mathbb H^n$ generated by finitely many hyperplane reflections has a fundamental domain whose closure is a compact Coxeter polyhedron in $\mathbb H^n$. In the sequel, we often use identical notions and descriptions for both, the polyhedron $P$ and the reflection group $G$.

\smallskip
For further details and results about hyperbolic Coxeter 
polyhedra and Coxe\-ter groups, their geometric-combinatorial and arithmetical characterisation as well as general (non-)existence results,
we refer to the foundational work of E. Vinberg \cite{V1, V2}.  An overview about the diverse partial classification 
results can be found in \cite{F-web}.

\subsection{Coxeter groups and growth rates}\label{section2-2}
A hyperbolic Coxeter group $G=(G,S)$ with $S=\{r_1,\ldots,r_N\}$  as 
above is the geometric realisation of an abstract Coxeter system $(W,S)$ of rank $N$
consisting of a group $W$ generated by a subset $S$ of elements $s_1,\ldots,s_N$ satisfying the relations as given by \eqref{refl-relations}. 
In the fundamental work \cite{Coxeter} of Coxeter,  the irreducible finite (or spherical) and affine Coxeter groups are classified.
Abstract Coxeter groups are most conveniently described by their \emph{Coxeter graphs} or by their {\it Coxeter symbols}. 
More precisely, the Coxeter graph 
$\Sigma=\Sigma(W)$ of a Coxeter system $(W,S)$ has nodes $v_1,\dots,v_N$ corresponding to the generators $s_1,\ldots,s_N$
of $W$, and two nodes $v_i$ and $v_j$ are joined by an edge with weight $m_{ij}\ge3$. In particular, there will be no edge if $m_{ij}=2$
and there will be an edge decorated by $\infty$ if the product element $s_is_j$ is of infinite order $m_{ij}=\infty$. 

In this way, the Vinberg 
graph of a hyperbolic Coxeter group is a refined version of its Coxeter graph.  
In this context, observe that the Coxeter graph $\grinf\bullet$ describes the affine group $\widetilde{A}_1$ and - simultaneously - is underlying the Vinberg graph $\bullet\cdots\bullet$ of a compact hyperbolic Coxeter $1$-simplex as given by any geodesic segment.
Furthermore, the reflection group in $\hbox{Isom}\mathbb H^2$ associated to the compact Lambert quadrilateral with Vinberg graph
$\bullet\cdots\gd\kern-2pt\bullet\cdots\bullet$ is given by the Coxeter graph $\grinf\gd\grinf\bullet$ while the Vinberg graph $\grinf\gd\bullet$ (coinciding with its Coxeter graph) describes
a non-compact hyperbolic triangle of area $\frac{\pi}{6}$.

\vskip.1cm
In the case that the rank $N$ of the Coxeter system $(W,S)$ is small, a description by the Coxeter symbol is more convenient. For example, $[p_1,\ldots,p_k]$ with integer labels $p_i\ge3$ is associated to a linear Coxeter graph
with $k+1$ edges marked by the respective weights. 
The Coxeter symbol $[(p,q,r)]$ describes a cyclic Coxeter graph with 3 edges of weights $p$, $q$ and $r$. We assemble
the different symbols into a single one in order to describe the different nature of parts of the
Coxeter graph in question; see 
\cite[Appendix]{JKRT}, for example. 
\begin{example}\label{Esselmann}
The Coxeter symbols of the seven Esselmann polyhedra in $\mathbb H^4$ are characterised by the fact that they contain two disjoint Coxeter symbols associated to compact hyperbolic triangles and called {\it triangular components} that are separated by at least one edge of (finite) weight $m\ge3$. Accordingly, the Esselmann polyhedron $E\subset\mathbb H^4$ as depicted in Figure \ref{fig:n+2} is described by the Coxeter symbol $[(3,4,3),4,(3,4,3)]$. Notice that none of the triangular components $(p,q,r)$, given by integers $p,q,r\ge2$ such that $\,\frac{1}{p}+\frac{1}{q}+\frac{1}{r}<1$, of the Coxeter symbols appearing in Esselmann's list is equal to $(2,3,7)$.
\end{example}

\medskip
For a Coxeter system $(W,S)$ with generating set $S=\{s_1,\ldots,s_N\}$, the (spherical) growth series $f_S(t)$ is defined by
\[
f_S(t)=1+\sum\limits_{k\ge1}a_kt^k\,\,,
\]
where $a_k\in\mathbb Z$ is the number of words $w\in W$ with $S$-length $k$. For references of the subsequent basic properties of $f_S(t)$, see for example
\cite{Hum, KK, KP}.  
The series $f_S(t)$ can be computed by Steinberg's formula
\begin{equation}\label{eq:Steinberg}
\frac{1}{f_S(t^ {-1})}=\sum\limits_{{W_T<W\atop\scriptscriptstyle{{\vert W_T\vert<\infty}}}}\,
\frac{(-1)^{\vert T\vert}}{f_T(t)}\,,
\end{equation}
where $W_T\,,\,T\subset S\,,$ is a finite Coxeter subgroup of $W$, and where $W_{\varnothing}=\{1\}$.
By a result of Solomon, the growth polynomials $f_T(t)$ in \eqref{eq:Steinberg} can be expressed by means of their \emph{exponents} $m_1=1,m_2,\ldots,m_p$
according to the formula
\begin{equation}\label{eq:Solomon}
f_T(t)=\prod\limits_{i=1}^ {p}\,[m_i+1]\,.
\end{equation}
Here we use the standard notation $\,[k]=1+t+\dots+ t^{k-1}$ with $\,[k,l]=[k]\cdot[l]\,$ and so on. By replacing the variable $t$ by $t^{-1}$,
the function $[k]$ 
satisfies the property $\,[k](t)=t^{k-1}[k](t^{-1})$.
\bigskip
\begin{table}
\vbox{\offinterlineskip
\hrule
\halign{&\vrule#&
\strut\quad#\quad\cr
height6pt&\omit&&\omit&&\omit&\cr
&\hfil Group\hfil&&Exponents\hfil&&\hfil Growth polynomial $f_S(x)$\hfil&\cr
height4pt&\omit&&\omit&&\omit&\cr
\noalign{\hrule}
height6pt&\omit&&\omit&&\omit&\cr
&$A_n$\hfil&&${1,2,\ldots,n-1,n}$&&$[2,3,\ldots,n,n+1]$\hfil&\cr
height3pt&\omit&&\omit&&\omit&\cr
&$B_n$&&$1,3,\ldots,2n-3,2n-1$\hfil&&$[2,4,\ldots,2n-2,2n]$\hfil&\cr
height3pt&\omit&&\omit&&\omit&\cr
&$D_n$&&${1,3,\ldots,2n-5,2n-3,n-1}$&&$[2,4,\ldots,2n-2,n]$\hfil&\cr
height3pt&\omit&&\omit&&\omit&\cr
&$G_2^{(m)}$\hfil&&$1,m-1$\hfil&&$[2,m]$\hfil&\cr
height3pt&\omit&&\omit&&\omit&\cr
&$F_4$\hfil&&$1,5,7,11$\hfil&&$[2,6,8,12]$\hfil&\cr
height3pt&\omit&&\omit&&\omit&\cr
&$H_3$\hfil&&$1,5,9$\hfil&&$[2,6,10]$\hfil&\cr
height3pt&\omit&&\omit&&\omit&\cr
&$H_4$\hfil&&$1,11,19,29$\hfil&&$[2,12,20,30]$\hfil&\cr
height6pt&\omit&&\omit&&\omit&\cr}
\hrule}
\bigskip
\caption{Exponents and growth polynomials of irreducible finite Coxeter groups}
\label{tab:table1}
\end{table}

\smallskip
Table \ref{tab:table1}
lists all irreducible finite Coxeter groups together with their growth polynomials up to the exceptional groups $E_6,E_7$ and $E_8$ which are irre\-levant for this work.
Let us add that the growth series of a reducible Coxeter system $(W,S)$ with factor groups $(W_1,S_1)$ and $(W_2,S_2)$ such that
$S=(S_1\times\{1_{W_2}\})\cup(\{1_{W_1}\}\times S_2)$, satisfies the product formula $f_S(t)=f_{S_1}(t)\cdot f_{S_2}(t)$.

By the above, in its disk of convergence, the growth series $f_S(t)$ is a rational function that can be expressed as the quotient of coprime monic polynomials $p(t), q(t)\in\mathbb Z[t]$ of equal degree. The {\em growth rate} $\tau_W=\tau_{(W,S)}$ is defined by 
\begin{equation*}
\tau_W=\limsup\limits_{k\rightarrow\infty}\sqrt[\leftroot{0}\uproot{2}k]{a_k}\,\,,
\end{equation*}
and it coincides with the inverse of the radius of convergence $R$ of $f_S(t)$. Since $\tau_W$ equals the biggest real root of the denominator polynomial $q(t)$, it is a real algebraic integer.

Consider a cocompact hyperbolic Coxeter group $G=(G,S)$. Then, 
the rational function $f_S(t)$ is reciprocal (resp. anti-reciprocal) for $n$ even (resp. $n$ odd)
(see \cite{KP}, for example). 
In particular, for $n=2$ and $4$, one has
$f_S(t^ {-1})=f_S(t)$ for all $t\not=0$. 
Furthermore, a
result of Milnor \cite{Milnor} implies that the growth rate $\tau_G$ is strictly bigger than $1$ so that $G$ is of exponential growth.
More specifically, for $n=2$ and $3$, $\tau_G$ is either a quadratic unit or a {\em Salem number}, that is, $\tau_G$ is a real algebraic integer $\alpha>1$ whose inverse
is a conjugate of $\alpha$, and all other conjugates lie on the unit circle (see \cite{Komori}, for example).  However,  by a result of 
Cannon \cite{Cannon, Cannon-Wag} 
(see also \cite[Theorem 4.1]{KP}), the growth rates of the five
Lann\'er groups acting on $\mathbb H^4$ and shown in Figure \ref{fig:cox-simplex} are not Salem numbers anymore; they 
are so-called {\em Perron numbers}, that is, real algebraic integers 
$>1$ all of whose other conjugates are of strictly smaller absolute value. 

\begin{example}\label{mingrowth-dim2}
The smallest known Salem number $\alpha_L\approx1.176281$ with minimal polynomial 
$L(t)=t^{10}+t^9-t^7-t^6-t^5-t^4-t^3+t+1$ equals the growth rate $\tau_{[7,3]}$ of the cocompact Coxeter triangle 
group $G=[7,3]$
with Coxeter graph $\gr{7}\gd\bullet$
which in turn is the smallest growth rate among {\em all} cocompact planar hyperbolic Coxeter groups; see \cite{Hironaka,KK}. 

The second smallest growth rate among them is realised by the Coxeter triangle group $[8,3]$ with Coxeter graph $\gr{8}\gd\bullet$ and appears as the seventh 
smallest known Salem number
$\approx1.23039$ given by the minimal polynomial $t^{10}-t^7-t^5-t^3+1$; see \cite{KL}.

As a consequence, the growth rate of the cocompact Lambert quadrilateral group $Q$ with Vinberg graph
$\,\bullet\cdots\gd\kern-2pt\bullet\cdots\bullet\,$ is strictly bigger than $\tau_{[8,3]}$. 
More precisely,
the growth rate of $Q$ is the Salem number $\tau_{Q}\approx 1.72208$ with minimal polynomial $t^4-t^3-t^2-t-1$. Notice also that the Coxeter graph of $Q$ equals $\,\grinf\gd\grinf\bullet\,$ (see Section \ref{section3-2}).

By applying similar techniques, it was shown in \cite{K0} (see also Floyd's work \cite{Fl}) that the Coxeter triangle group with Vinberg graph $\grinf\gd\bullet$ has
smallest growth rate among all non-cocompact hyperbolic Coxeter groups of finite coarea in $\hbox{Isom}\mathbb H^2$, and as such it is unique. The growth rate $\tau_{[\infty,3]}\approx 1.32471$ has minimal polynomial $t^3-t-1$ and
equals the smallest Pisot number $\alpha_S$ as shown by C. Smyth (see \cite{Smyth} and \cite[Section 3.2]{K0}, for example). Recall that a {\it Pisot number} is an algebraic integer $\alpha>1$ all of whose other conjugates are of absolute value less than $1$.

For later purpose,  let us emphasize the above comparison result as follows.
\begin{equation}\label{eq:tau-[83]}
\tau_{[8,3]}<\tau_{[\infty,3]}\,\,.
\end{equation}
 \end{example}
 
 \begin{example}\label{mingrowth-dim3}
Among the cocompact Coxeter tetrahedral groups, the smallest growth rate is about $1.35098$ with minimal polynomial 
$t^{10}-t^9-t^6+t^5-t^4-t+1$; it is achieved in a unique way by the
group $G=[3,5,3]$ with Coxeter graph $\gd\gr{5}\gd\bullet$; see \cite{KK}.
 \end{example}
 
\begin{example}\label{growth-5333}
Consider the (arithmetic) Lann\'er group $L=[5,3,3,3]$ with Coxeter graph $\,\gr{5}\gd\gd\gd\bullet$ mentionned in Example \ref{ex1}. 
By means of Steinberg's formula \eqref{eq:Steinberg} and Table \ref{tab:table1}, the growth function $f_L(t)=f_S(t)$ can be expressed according to
\begin{align*}\label{eq:functionL}
 &\frac{1}{f_L(t^{-1})}=\frac{1}{f_L(t)}=1-\frac{5}{[2]}+\frac{6}{[2,2]}+\frac{3}{[2,3]}+\frac{1}{[2,5]}-&\nonumber\\
 &-\left\{\frac{1}{[2,2,2]}+\frac{4}{[2,2,3]}+\frac{2}{[2,2,5]}+\frac{2}{[2,3,4]}+\frac{1}{[2,6,10]}\right\}+\\
 &+\frac{1}{[2,2,3,4]}+\frac{1}{[2,2,3,5]}+\frac{1}{[2,2,6,10]}+\frac{1}{[2,3,4,5]}+\frac{1}{[2,12,20,30]}\,\,.&\nonumber
\end{align*}

\smallskip
It follows that $f_L(t)$ equals
\[
\frac{[2, 12, 20, 30]}{\begin{array}{l}
1 - t - t^7 + t^8 - t^9 + t^{10} - t^{11} + t^{14} - t^{15} + t^{16} - 2t^{17} + 2t^{18} - t^{19} + t^{20} \\
-\ t^{21} + t^{22} - t^{23} + 2t^{24} - 2t^{25} + 2t^{26} - 2t^{27} + 2t^{28} - t^{29} + t^{30} - t^{31} + 2t^{32} \\
-\ 2t^{33} + 2t^{34} - 2t^{35} + 2t^{36} - t^{37} + t^{38} - t^{39} + t^{40} - t^{41} + 2t^{42} - 2t^{43} + t^{44} \\
-\ t^{45} + t^{46} - t^{49} + t^{50} - t^{51} + t^{52} - t^{53} - t^{59} + t^{60}
\end{array}} \,\,.
\]

\end{example}

The denominator polynomial $q(t)$ of $f_L(t)$ is palindromic and of degree 60. By means of the software PARI/GP \cite{PARI2}, one checks that $q(t)$
is irreducible and has -- beside non-real roots some of them being of absolute value one --
exactly two inversive pairs $\alpha^{\pm 1}, \beta^{\pm 1}$
of real roots such that $\alpha>\beta>1$. Indeed, by the results in \cite{Cannon,Cannon-Wag}, $\alpha$ is not a 
Salem number anymore. As a consequence, the growth rate $\tau_L=\alpha\approx1.19988$ of the Lann\'er group $L=[5,3,3,3]$ is {\it not} a 
Salem number. However, $\tau_{[5,3,3,3]}$ is a Perron number. All these properties can be checked by the software CoxIter developed by R. Guglielmetti \cite{Gug1, Gug2}.

\begin{example}\label{growth-53333}
The Coxeter prism $M\subset\mathbb H^5$ found by Makarov is given by the Vinberg graph 
$\,\gr{5}\gd\gd\gd\gd\kern-1pt\bullet\cdots\bullet\kern-14pt\raise
6pt\hbox{$\scriptstyle l$}\kern12pt$ where the hyperbolic distance $l$ between the (unique) pair of non-intersecting facets of $M$ satisfies
\[
\cosh l=\frac{1}{2}\,\sqrt{\frac{7+\sqrt{5}}{2}}\approx1.07448\,\,.
\]
In fact, the computation of $l$ is easy since the determinant of the Gram matrix of $M$ vanishes.
As in Example \ref{growth-5333}, one can exploit Steinberg's formula \eqref{eq:Steinberg} and Table \ref{tab:table1} in order to establish the growth function $f_M(t)$. The denominator polynomial of $f_M(t)$ splits into the factor $t-1$ and a certain irreducible palindromic polynomial $q(t)$. As above, the software CoxIter allows us to identify the growth rate of the reflection group $[5,3,3,3,3]$ associated to $M$, as given by the largest zero of $q(t)$, with the Perron number $\tau_M\approx1.64759$. Notice that the factor $t-1$ is responsible for the vanishing of the Euler characteristic of $M$ (see \cite[(2.7)]{KK}, for example).
\end{example}
\begin{example}\label{growth-K}
For the Kaplinskaja prism $K\subset\mathbb H^5$ depicted in Figure
\ref{fig:n+2}, the denominator polynomial of the growth function $f_K(t)$ splits into the factor $t-1$ and an irreducible palindromic polynomial $q(t)$ of degree $32$. By means of CoxIter, one deduces that
the growth rate is a Perron number of value $\tau_K\approx 2.08379$.
\end{example}
\medskip
In a similar way, one computes the individual growth series and related invariants and properties of any cocompact (or cofinite) hyperbolic Coxeter group with given Vinberg graph. 

\medskip
Growth rates satisfy an important monotonicity property on the partially ordered set of Coxeter systems as follows. 
For two Coxeter systems $(W,S)$ and $(W',S')$, one defines $(W,S)\le(W',S')$ if there is an injective map $\iota:S\rightarrow S'$ such that $m_{st}\le m'_{\iota(s)\iota(t)}$ for all $s,t\in S$. If $\iota$ extends to an isomorphism between $W$ and $W'$, one writes $\,(W,S)\simeq(W',S')$, and $\,(W,S)<(W',S')$ otherwise. 
This partial order satisfies the descending chain condition since $m_{st}\in\{2,3,\ldots,\infty\}$ where $s\not=t$. In particular,
any strictly decreasing sequence of Coxeter systems is finite (see \cite{McM}).
In this work, the following result of Terragni \cite[Section 3]{Terragni}
will play an essential role.
\setcounter{thm}{1}
\begin{thm}\label{Terragni}
If $\,(W,S)\le(W',S')$, then  $\tau_{(W,S)}\le\tau_{(W',S')}$.
\end{thm}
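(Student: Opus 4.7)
The plan is to establish the pointwise inequality $a_k^{(W,S)} \le a_k^{(W',S')}$ for every $k \ge 0$, where $a_k$ counts elements of word-length $k$; once this is proved, the conclusion $\tau_{(W,S)} \le \tau_{(W',S')}$ follows directly from the definition of the growth rate as a limit superior of $k$-th roots of $a_k$.

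First I would factor the given relation $(W,S) \le (W',S')$ into two elementary moves. Let $(W'',S)$ denote the Coxeter system on the generating set $S$ of $(W,S)$ but with the exponents $m''_{st} := m'_{\iota(s)\iota(t)}$ inherited from $(W',S')$. Then $(W,S) \le (W'',S)$ via the identity on $S$ (an \emph{exponent enlargement}), and $(W'',S)$ is Coxeter-isomorphic via $\iota$ to the standard parabolic subgroup of $(W',S')$ generated by $\iota(S)$ (a \emph{parabolic inclusion}). It suffices to prove the coefficient inequality separately for each of these two moves.

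The parabolic inclusion is immediate from the classical Bourbaki--Tits fact (see, e.g., Humphreys, Chapter 5) that the length function on a standard parabolic subgroup $W'_T$ of $W'$ coincides with the restriction of the length function on $W'$; hence $a_k(W'_T) \le a_k(W')$ termwise. For the exponent enlargement, I would construct a length-preserving injection $\varphi: W \to W''$ via ShortLex normal forms: fix a total order on $S$, and send $w \in W$ to the element of $W''$ represented by the unique ShortLex reduced expression of $w$ in $(W,S)$. The substantive claim is that any word reduced for $(W,S)$ remains reduced for $(W'',S)$: because the braid relations in $(W'',S)$ are of length $m''_{st} \ge m_{st}$, any alleged shortening of a $W$-reduced word inside $W''$, traced by the Tits/Matsumoto theorem, must eventually collapse to a $W$-shortening as well, a contradiction. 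Injectivity of $\varphi$ follows by running the same analysis on two distinct ShortLex representatives.

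The main obstacle is this preservation of reducedness under the exponent-enlargement move, since the equivalence classes of reduced words under the two sets of braid relations differ non-trivially and cannot be matched up directly. A cleaner alternative I would keep in reserve is a root-system comparison: the length $\ell(w)$ in any Coxeter group equals the number of positive roots sent to negative roots by $w$, and a controlled deformation of the Tits bilinear form embeds the positive roots of $(W,S)$ into those of $(W'',S)$ in a length-compatible manner. Either route yields the exponent-enlargement inequality, which together with the parabolic step produces the desired pointwise comparison $a_k^{(W,S)} \le a_k^{(W',S')}$ and hence the theorem.
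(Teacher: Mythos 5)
The paper itself does not prove this statement: it is imported as Theorem~\ref{Terragni} with a citation to Terragni, so there is no internal proof to compare yours against. Your proposal is essentially a correct reconstruction of the standard argument, and it in fact yields the stronger coefficient-wise inequality $a_k^{(W,S)}\le a_k^{(W',S')}$, which immediately gives the inequality of growth rates that the paper actually uses. The factorisation into a parabolic inclusion followed by an exponent enlargement is right, and the parabolic step is indeed immediate from the compatibility of length functions on standard parabolic subgroups. The one place where your write-up is looser than it should be is the exponent-enlargement step: the phrase ``any alleged shortening \dots must eventually collapse to a $W$-shortening'' is not quite the right mechanism, because a braid relation of $(W,S)$ with $m_{st}<m''_{st}$ is \emph{not} a relation of $(W'',S)$, so rewriting sequences cannot be transported between the two groups in either direction. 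The clean version of the lemma is this: a $W$-reduced word contains no consecutive alternating $\{s,t\}$-run of length exceeding $m_{st}$ (such a run is already non-reduced in the dihedral parabolic), hence no $W''$-braid move with $m''_{st}>m_{st}$ is ever applicable to it; the applicable $W''$-moves are exactly those with $m''_{st}=m_{st}$, and these are simultaneously $W$-moves, so they keep you inside the set of $W$-reduced expressions of the same element of $W$. By Tits' theorem no word in that set contains a repeated adjacent letter, so the word is $W''$-reduced; the same reachability argument gives injectivity of your ShortLex map, since two ShortLex forms representing the same element of $W''$ would be connected by such moves and hence represent the same element of $W$. With that lemma made explicit your proof is complete; the root-system alternative you keep in reserve is dispensable, and is in fact harder to make precise since there is no group homomorphism $W\to W''$ along which to transport roots.
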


\begin{example}\label{ex:tau-Esselmann}
Consider the seven Esselmann groups $E_i\subset\hbox{Isom}\mathbb H^4\,,\,1\le i\le 7\,,$ whose Coxeter symbols consist of two triangular components separated by at least one edge of weight $m\ge3$; see Example \ref{Esselmann}. Each of their triangular components describes a cocompact Coxeter group in $\hbox{Isom}\mathbb H^2$ of the type $(2,3,8)$, $(2,3,10)$, $(2,4,5)$, $(2,5,5)$, $(3,3,4)$ or $(3,3,5)$. By means of Theorem \ref{Terragni}, we conclude that 
\begin{equation}\label{eq:tau_E}
\tau_{[8,3]}\le\tau_{E_i}\,\,,\,\,1\le i\le 7\,\,.
\end{equation}
\end{example}

\begin{notice*} In the sequel, we will work with the Coxeter graph instead of the Vinberg graph associated to a hyperbolic Coxeter group $(W,S)$. Hence, we replace each dotted edge between two nodes $\nu_s$ and $\nu_{s'}$ by an edge with weight $\infty$, just indicating that 
the product element $ss'\in W$ is of infinite order.  
\end{notice*}
\section{Growth minimality in dimensions four and five}\label{section3}
In this section, we prove the following two results as announced in the Introduction  (see Section \ref{Intro}).

\begingroup
\def\thetheorem{\ref{thm:A}}
\begin{theorem}
Among all Coxeter groups acting cocompactly on $\mathbb H^4$, the 
Coxeter simplex group $[5,3,3,3]$ has minimal growth rate, and as such it is unique.
\end{theorem}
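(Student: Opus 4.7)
The plan is to exhaust all compact Coxeter polyhedra $P\subset\mathbb H^4$ according to the number $N$ of facets and to verify in every case the strict inequality $\tau_P>\tau_L$, where $\tau_L\approx 1.19988$ is the growth rate of $L=[5,3,3,3]$ recorded in Example~\ref{growth-5333}. The Felikson--Tumarkin structure result (Theorem~\ref{1-dotted}) splits the task into three families: the five Lann\'er simplices ($N=5$), the seven Esselmann polyhedra ($N=6$, all facets mutually intersecting), and all polyhedra possessing at least one pair of non-intersecting facets.

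First, for the five Lann\'er simplices of Figure~\ref{fig:cox-simplex}, I would compute each growth function explicitly by combining Steinberg's formula~\eqref{eq:Steinberg}, Solomon's formula~\eqref{eq:Solomon} and Table~\ref{tab:table1} -- exactly as was done for $L$ itself in Example~\ref{growth-5333} -- and extract the smallest real root $>1$ of each denominator polynomial (a finite check easily verified using CoxIter), observing that $\tau_L$ is strictly minimal. Next, for the seven Esselmann polyhedra $E_1,\dots,E_7$, I would invoke the subsystem argument of Example~\ref{ex:tau-Esselmann}: each $E_i$ contains a triangular parabolic Coxeter subsystem of type $(p,q,r)\in\{(2,3,8),(2,3,10),(2,4,5),(2,5,5),(3,3,4),(3,3,5)\}$, none of which equals $(2,3,7)$. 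Terragni's monotonicity (Theorem~\ref{Terragni}) together with Example~\ref{mingrowth-dim2} then gives $\tau_{E_i}\ge\tau_{[8,3]}\approx 1.23039>\tau_L$.

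It remains to treat polyhedra $P\subset\mathbb H^4$ admitting a pair of non-intersecting facets. By Remark~\ref{one-dotted}, those having exactly one such pair appear in Kaplinskaja's list of straight prisms ($N=6$) or in Tumarkin's classification of compact polyhedra with $N=n+3=7$ facets -- a finite explicit enumeration that can be checked case by case by exhibiting, in each Coxeter graph, either a triangular parabolic subsystem different from $(2,3,7)$ (whose growth rate is thus $\ge \tau_{[8,3]}>\tau_L$) or, if the labels are too small for that, a rank-$4$ parabolic subsystem realising a Lann\'er tetrahedron of $\mathbb H^3$ whose growth rate (the smallest being $\tau_{[3,5,3]}\approx 1.35098$ from Example~\ref{mingrowth-dim3}) exceeds $\tau_L$. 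Polyhedra with $N\ge n+4=8$ facets have at least two dotted edges in their Vinberg graphs again by Remark~\ref{one-dotted}; the increased combinatorial complexity forces a suitable subsystem of one of the above types to be present, so Theorem~\ref{Terragni} closes the argument. Strictness in every comparison yields uniqueness.

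The main obstacle will be ensuring uniform applicability of the subsystem argument for this last family: when the Vinberg graph of $P$ uses only labels from $\{3,4,5\}$ together with dotted edges placed so that no immediate triangular subsystem of large type appears, the rank-$3$ comparison against $[8,3]$ and the rank-$4$ Lann\'er-tetrahedron comparison must be combined carefully, possibly supplemented by direct CoxIter computation for a handful of exceptional configurations. Once such a subsystem is located in each case, Terragni's monotonicity completes the argument.
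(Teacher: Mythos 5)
Your first two cases match the paper exactly: the five Lann\'er simplices are handled by direct computation of their growth rates, and the seven Esselmann polyhedra by locating a cocompact triangular component of type other than $(2,3,7)$, so that the second-minimality of $\tau_{[8,3]}$ among planar cocompact Coxeter groups together with Terragni's monotonicity gives $\tau_{E_i}\ge\tau_{[8,3]}>1.2>\tau_{[5,3,3,3]}$. The problem lies in your third case, where there is a genuine gap.

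For polyhedra with a pair of non-intersecting facets you propose to enumerate the classified families with exactly one such pair and then claim that for $N\ge 8$ the ``increased combinatorial complexity forces'' a cocompact triangle subsystem or a Lann\'er tetrahedron subsystem to appear. Neither half of this works as stated. Already in the classified lists your two candidate subsystems need not exist: a Kaplinskaja prism in $\mathbb H^4$ has a linear Coxeter graph such as $5,3,3,3,\infty$, whose connected rank-$3$ standard subgroups are $H_3$, $A_3$ and $[\infty,3]$ and whose connected rank-$4$ standard subgroups are $H_4$, $A_4$ and $[3,3,\infty]$ --- no cocompact hyperbolic triangle group and no Lann\'er tetrahedron among them. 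And for $N\ge 8$ there is no classification to fall back on, so ``forced by complexity'' is not an argument. The idea you are missing is that the disjoint pair of facets itself is the subsystem: passing from the Vinberg graph to the Coxeter graph, the dotted edge becomes an edge of weight $\infty$, and since the graph is connected of order $\ge 6$ it contains a connected order-$3$ subgraph $\sigma$ containing that $\infty$-edge, hence with $(W_\sigma,T)\ge [\infty,3]$. By Terragni's monotonicity $\tau_{[\infty,3]}\le\tau_\sigma\le\tau_G$, and $\tau_{[\infty,3]}\approx 1.32471$ is the smallest Pisot number, which exceeds $\tau_{[8,3]}$ by \eqref{eq:tau-[83]} and a fortiori exceeds $\tau_{[5,3,3,3]}\approx 1.19988$. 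This single observation disposes of \emph{all} polyhedra with a non-intersecting pair uniformly, with no case analysis and no appeal to classifications beyond Theorem \ref{1-dotted}. Without it (or some substitute covering the unclassified $N\ge 8$ range), your proof is incomplete.
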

\addtocounter{theorem}{-1}
\endgroup

\begingroup
\def\thetheorem{\ref{thm:B}}
\begin{theorem}
Among all Coxeter groups acting cocompactly on $\mathbb H^5$, the 
Coxeter prism group based on $[5,3,3,3,3]$ has minimal growth rate, and as such it is unique.
\end{theorem}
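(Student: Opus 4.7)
\textbf{Plan for Theorem \ref{thm:B}.}
Following the strategy that succeeded in dimension four, I would fix a cocompact Coxeter group $G\subset\hbox{Isom}\,\mathbb{H}^5$ with fundamental polyhedron $P$ distinct from the Makarov prism $M$, and show that $\tau_G>\tau_M\approx 1.64759$. Since compact hyperbolic Coxeter $5$-simplices do not exist, the polyhedron $P$ has $N\geq 7$ facets, and the argument proceeds by stratifying according to $N\in\{7,8\}$ and $N\geq 9$.

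For $N=7$, Example \ref{n+2} restricts the candidates to Kaplinskaja's prisms (and their glueings), a short explicit family containing $M$. Using Steinberg's formula \eqref{eq:Steinberg} together with \eqref{eq:Solomon} -- or equivalently the software CoxIter -- I would compute $\tau_G$ for each such $P\neq M$ and verify $\tau_G>\tau_M$; Example \ref{growth-K} already illustrates this for the prism $K$ with $\tau_K\approx 2.08379$. For $N=8$ one appeals to Tumarkin's classification (Example \ref{n+3}): there are exactly $16$ such polyhedra, including the exceptional $T$ (whose Vinberg graph carries a single dotted edge) and fifteen polyhedra whose Vinberg graphs carry three consecutive dotted edges. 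Since the list is finite, a case-by-case growth-rate computation concludes this case as well.

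The substantive case is $N\geq 9$. By the Felikson--Tumarkin result (Remark \ref{one-dotted}), the Coxeter graph $\Sigma(P)$ contains at least two edges of weight $\infty$. The plan is to locate inside $\Sigma(P)$ a rank-$4$ subdiagram $\Sigma_0$ such that the associated Coxeter system $(W_0,S_0)$ satisfies $(W_0,S_0)\leq (G,S)$ in the partial order of Section \ref{section2-2}, and whose growth rate already satisfies $\tau_{W_0}\geq\tau_M$. Terragni's monotonicity (Theorem \ref{Terragni}) then yields $\tau_G\geq\tau_{W_0}\geq\tau_M$. The two comparison lemmas \ref{compare1} and \ref{compare2} are tailored to certify $\tau_{W_0}\geq\tau_M$ for the short list of rank-$4$ diagrams that can occur: typical candidates arise by picking the two $\infty$-edges of $\Sigma(P)$ together with a node of finite weight adjacent to each of them, giving a manageable combinatorial catalogue.

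The hard part is precisely this $N\geq 9$ case, for it is not enough to merely find two dotted edges -- one must argue that some rank-$4$ sub-configuration surviving in $\Sigma(P)$ actually yields growth rate at least $\tau_M$, ruling out the a priori possibility that the nodes incident to the two dotted edges are so sparsely connected that the resulting subdiagram corresponds to a Coxeter group with very small $\tau$. To circumvent this, I would combine the signature constraint on the Gram matrix $\mathrm{Gr}(P)$ (which forbids the two $\infty$-edges from separating $\Sigma(P)$ into two dotted components) with the compactness constraint (each vertex link is a spherical rank-$5$ Coxeter simplex, drawn from the list in Table \ref{tab:table1}) to force each dotted edge to be incident to a node carrying a finite label $\geq 3$. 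The rank-$4$ subdiagrams produced this way are exactly those handled by Lemmas \ref{compare1} and \ref{compare2}, and the uniqueness statement then follows because each of the growth-rate inequalities used is strict except when $P=M$.
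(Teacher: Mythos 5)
Your plan is essentially the paper's own proof: the same stratification by $N=7$, $N=8$ and $N\ge 9$, the same appeal to the Kaplinskaja and Tumarkin classifications in the first two cases, and for $N\ge 9$ the same extraction of a connected rank-$4$ subdiagram containing two $\infty$-edges, followed by Terragni's monotonicity, Lemmas \ref{compare1} and \ref{compare2}, and the numerical comparison $\tau_M<\tau_Q$. The only harmless deviations are that the paper treats the fifteen non-exceptional $N=8$ polyhedra via the order-$4$ subdiagram $[\infty,\infty,\infty]$ rather than by direct computation of all sixteen growth rates, and that the ``manageable combinatorial catalogue'' you anticipate for $N\ge 9$ turns out to be exactly the three diagrams $W_1,W_2,W_3$ of Figure \ref{fig:Cox-graphs1} (rank $4$ being genuinely necessary here, since the rank-$3$ diagram $[\infty,\infty]$ spanned by two adjacent $\infty$-edges only has growth rate $(1+\sqrt{5})/2\approx 1.618<\tau_M$, so your instinct that the two dotted edges alone do not suffice is correct).
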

\addtocounter{theorem}{-1}
\endgroup

\subsection{The proof of Theorem A}\label{section3-1}
Consider a group $G\subset\hbox{Isom}\mathbb H^4$ generated by the set $S$ of reflections $r_1,\ldots,r_N\,$ in the $N$ facet hyperplanes bounding  a compact Coxeter polyhedron $P\subset\mathbb H^4$.
The group $G=(G,S)$ is a cocompact hyperbolic Coxeter group of rank $N\ge5$.  Assume that the group $G$ is not isomorphic to the Coxeter simplex group $L=[5,3,3,3]$. We have to show that $\tau_G>\tau_{[5,3,3,3]}\approx1.19988$.

\medskip
\parindent=0pt
In view of Theorem \ref{1-dotted}, we distinguish between the two cases whether all facets of $P$ are mutually intersecting or not.
In the case that all facets of $P$ are mutually intersecting, then $P$ is either a Lann\'er simplex and $G$ is of rank $5$, or $P$ is one of the seven Esselmann polyhedra with related Coxeter groups $E_i\,,\,1\le i \le 7\,,$ of rank $6$.

\medskip
\parindent=0pt
(1a) The Coxeter graphs of the five Lann\'er simplices $L=L_1,\ldots,L_5$ in $\mathbb H^4$  are given in Figure \ref{fig:cox-simplex}. 
The associated growth rates have been computed by means of Steinberg's formula and are well known (see also \cite{Cannon}, \cite{Perren}, \cite{Terragni2}). The software CoxIter yields the values
\begin{align*}
\tau_{[5,3,3,4]}\approx 1.38868\quad,&\quad\tau_{[5,3,3,5]}\approx1.51662\quad,\\
\tau_{[5,3,3^{1,1}]}\approx 1.44970\quad,&\quad\tau_{[(3^4,4)]}\approx 1.62282\quad,
\end{align*}
implying that the growth rate of $L=[5,3,3,3]$ is strictly smaller than those of $L_2,\ldots,L_5$.

\smallskip
\parindent=0pt
(1b) Let us investigate the growth rates of the Esselmann groups $E_1,\ldots,E_7$. By 
Example \ref{ex:tau-Esselmann}, \eqref{eq:tau_E}, we have that
\begin{equation*}
\tau_{[8,3]}\le\tau_{E_i}\,\,,\,\,1\le i\le 7\,\,.
\end{equation*}
It follows from Example \ref{mingrowth-dim2} and Example \ref{growth-5333}  that
\[
1.19988\approx\tau_{[5,3,3,3]}<1.2<\tau_{[8,3]}\approx1.23039\,\,,
\]
which shows that the growth rate of $L=[5,3,3,3]$ is strictly smaller than those of the Esselmann groups $E_1,\ldots,E_7$.

\medskip
\parindent=0pt
(2) Suppose that $P$ has at least one pair of non-intersecting facets. Therefore, the Coxeter graph $\Sigma$ of $P$ contains at least one edge with weight $\infty$. Since $P$ has at least $N\ge6$ facets,
the graph $\Sigma$ -- being connected -- contains a proper connected subgraph $\sigma$ of order $3$ with weights $p,q\in\{2,3,\ldots,\infty\}$ of the form as depicted in Figure \ref{fig:sigma}.
\begin{figure}[H]
\centering
\begin{picture}(40,30)
    \put(0,0){\circle*{3.5}}
    \put(30,0){\circle*{3.5}}
    \put(15,23){\circle*{3.5}}   
    
    \put(0,0){\line(1,0){30}} 
    \put(0,0){\line(2,3){15}}  
    \put(30,0){\line(-2,3){15}}  
             
    \put(10,-9){$\infty$}
    \put(-5,10){$p$}
    \put(28,10){$q$}
      
\end{picture}
\caption{A subgraph $\sigma$ of $\Sigma$}
\label{fig:sigma}
\end{figure}
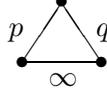
By construction, the subgraph $\sigma$ gives rise to a standard Coxeter subgroup $(W,T)$ of rank $3$ of $(G,S)$ that 
satisfies $(W,T)\le(G,S)$. By Theorem \ref{Terragni}, Example \ref{mingrowth-dim2}, \eqref{eq:tau-[83]}, and Example \ref{growth-5333}, we deduce in a similar way as above that
\[
\tau_{[5,3,3,3]}<\tau_{[8,3]}<\tau_{[\infty,3]}\le \tau_{\sigma}\le\tau_{\Sigma}\,\,,
\]

which finishes the proof of Theorem \ref{thm:A}.

\hfill{\qedsymbol}

\bigskip
\subsection{The proof of Theorem B}\label{section3-2}
Let $G\subset\hbox{Isom}\mathbb H^5$ be a discrete group
generated by the set $S$ of reflections $r_1,\ldots,r_N\,$ in the $N$ facet hyperplanes of a compact Coxeter polyhedron $P\subset\mathbb H^5$.
The group $G=(G,S)$ is a cocompact hyperbolic Coxeter group of rank $N\ge6$.  Assume that $G$ is not isomorphic to Makarov's rank $7$ prism group based on $[5,3,3,3,3]$. The associated
Coxeter prism $M$ is described and the growth rate $\tau_M$
is given in Example \ref{growth-53333}.
We have to show that $\tau_G>\tau_M\approx1.64759$.

\medskip
\parindent=0pt
Inspired by the proof of Theorem \ref{thm:A},
we look for appropriate Coxeter groups of smaller rank such that their growth data can be exploited to derive suitable lower bounds
in view of Theorem \ref{Terragni}.
To this end, consider the following abstract Coxeter groups $W_1, W_2$ and $W_3\,$
with generating subsets $S_1,S_2$ and $S_3$ 
of rank $4$ as defined by the Coxeter graphs in Figure $\ref{fig:Cox-graphs1}$.
\begin{figure}[H]
\centering
\begin{picture}(280,35)
    \put(0,20){\circle*{3.5}}
    \put(20,20){\circle*{3.5}}
    \put(40,20){\circle*{3.5}}   
    \put(60,20){\circle*{3.5}} 

    \put(110,20){\circle*{3.5}}
    \put(130,20){\circle*{3.5}}
    \put(150,20){\circle*{3.5}}   
    \put(170,20){\circle*{3.5}} 
       
    \put(220,20){\circle*{3.5}}
    \put(240,20){\circle*{3.5}}
    \put(260,20){\circle*{3.5}}   
    \put(240,0){\circle*{3.5}}
    
    \put(0,20){\line(1,0){60}}
    \put(110,20){\line(1,0){60}}
    \put(220,20){\line(1,0){40}}
    \put(240,0){\line(0,1){20}}

    \put(6,24){$\scriptstyle \infty$}
    \put(46,24){$\scriptstyle \infty$}
    \put(116,24){$\scriptstyle \infty$}
    \put(136,24){$\scriptstyle \infty$} 
    \put(226,24){$\scriptstyle \infty$}  
    \put(246,24){$\scriptstyle \infty$}   
    
\end{picture}
\caption{The three abstract Coxeter groups $W_1,W_2$ and $W_3$}
\label{fig:Cox-graphs1}
\end{figure}
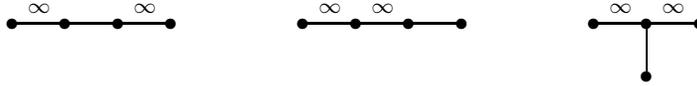
The Coxeter systems $(W_i,S_i)$ can be represented by hyperbolic Coxeter groups $G_i$ for each $1\le i\le 3$, and they will play an important role when comparing growth rates.
In fact, the Coxeter graph of $W_1$ coincides with the Coxeter graph of the 
cocompact Lambert quadrilateral group $Q\subset\hbox{Isom}\mathbb H^2$ with growth rate $\tau_Q\approx1.72208$ (see Example \ref{mingrowth-dim2}). Since, for the Makarov prism $M$, we have $\tau_M\approx1.64759$,
we deduce the following important fact.
\begin{equation}\label{comp-value5}
\tau_M<\tau_Q=\tau_{G_1}\,\,.
\end{equation}
Each of the remaining Coxeter groups $W_2$ and $W_3$ can be represented as a discrete subgroup of $O^+(3,1)$ generated by reflections 
in the facets of a Coxeter tetrahedron of infinite volume. Indeed, one easily checks that the associated Tits form is of signature $(3,1)$ and that some of the simplex vertices are not hyperbolic but ultra-ideal points (of positive Lorentzian norm).
More importantly, the following result holds.
\setcounter{lem}{0}
\begin{lem}\label{compare1}$ $\newline
\hbox{\rm (1)\quad$\tau_{G_1}<\tau_{G_2}$\,;}

\hbox{\rm (2)\quad$\tau_{G_1}<\tau_{G_3}$\,.}
\end{lem}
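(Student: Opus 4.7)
The plan is to apply Steinberg's formula \eqref{eq:Steinberg} directly to each of the three rank-$4$ Coxeter systems $(W_i, S_i)$, extract an explicit denominator polynomial for the growth series, and then compare the largest real roots. All three groups share the same set of finite standard parabolic subgroups of rank $\le 2$, namely the trivial group, four copies of $A_1$, three copies of $A_1 \times A_1$, and one copy of $A_2$ (from the unique edge of weight $3$). They differ only at rank $3$: $W_1$ has no finite rank-$3$ parabolic; $W_2$ has exactly one, of type $A_1 \times A_2$, arising from the triple $\{s_1, s_3, s_4\}$ in which only the pair $s_3, s_4$ carries a finite label; and $W_3$ has one of type $A_1^3$, coming from the three pairwise commuting generators away from the central node of the T-shape. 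The full rank-$4$ group is infinite in all three cases.

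Plugging Solomon's formulas from Table \ref{tab:table1} into \eqref{eq:Steinberg} and simplifying, I would read off the growth rate as the largest real root of: $p_1(t) = t^4 - t^3 - t^2 - t + 1$ for $G_1$ (the Salem polynomial of the Lambert quadrilateral group $Q$ discussed in Example \ref{mingrowth-dim2}, so that $\tau_{G_1} = \tau_Q$); $p_2(t) = t^3 - t^2 - t - 1$ for $G_2$ (the tribonacci polynomial); and $p_3(t) = t^4 - 2t^2 - 3t - 1$ for $G_3$.

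To derive the comparisons I would exploit the palindromicity of $p_1$: its two real roots are reciprocal, with $\tau_{G_1} > 1 > 1/\tau_{G_1} > 0$, and since $p_1(1) = -1 < 0$, the polynomial $p_1$ is negative on $(1/\tau_{G_1}, \tau_{G_1})$ and positive on $(\tau_{G_1}, \infty)$. Hence, to obtain $\tau_{G_1} < \tau_{G_j}$ for $j \in \{2, 3\}$, it suffices to verify that $p_1(\tau_{G_j}) > 0$. For part (1), the tribonacci identity $\tau_{G_2}^3 = \tau_{G_2}^2 + \tau_{G_2} + 1$ gives $\tau_{G_2}^4 = 2\tau_{G_2}^2 + 2\tau_{G_2} + 1$, and direct substitution yields $p_1(\tau_{G_2}) = 1$. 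For part (2), using $\tau_{G_3}^4 = 2\tau_{G_3}^2 + 3\tau_{G_3} + 1$, one has $p_1(\tau_{G_3}) = -\tau_{G_3}^3 + \tau_{G_3}^2 + 2\tau_{G_3} + 2$; the elementary bound $\tau_{G_3} < 2$ (obtained from $p_3(2) = 1 > 0$ together with the monotonicity of $p_3$ on $[2,\infty)$) combined with the check that $-t^3 + t^2 + 2t + 2$ is positive throughout $[0, 2]$ (its values at both endpoints equal $2$, and its only positive critical point lies strictly in the interior) produces the required positivity.

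The main technical obstacle is the explicit Steinberg bookkeeping and the polynomial simplification that identifies the denominators above. Terragni's monotonicity theorem cannot be invoked directly, since the Coxeter graphs of $W_1, W_2, W_3$ carry the same multiset of edge labels and are mutually incomparable in the partial order of Coxeter systems; accordingly the comparison must proceed through explicit polynomial evaluation.
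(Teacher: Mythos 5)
Your argument is correct, and its starting point coincides exactly with the paper's: both proofs run Steinberg's formula \eqref{eq:Steinberg} over the same list of finite parabolics (four $A_1$, three $A_1\times A_1$, one $A_2$ in every case, plus one extra rank-$3$ subgroup of type $A_1\times A_2$ for $W_2$ and $A_1^3$ for $W_3$). Where you diverge is the concluding mechanism. The paper never extracts denominator polynomials: it keeps the identities $1/f_1(t^{-1})=h(t)$, $1/f_2(t^{-1})=h(t)-1/[2,2,3]$, $1/f_3(t^{-1})=h(t)-1/[2,2,2]$ and notes that the differences $1/[2,2,3]$ and $1/[2,2,2]$ are positive for all $t>0$, so the first zero of $1/f_1(x)$ on $(0,1)$ (the radius of convergence) lies strictly to the right of those of $1/f_2(x)$ and $1/f_3(x)$; inverting gives both strict inequalities at once. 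You instead clear denominators to get $p_1=t^4-t^3-t^2-t+1$, $p_2=t^3-t^2-t-1$, $p_3=t^4-2t^2-3t-1$ (all three are what the Steinberg computation actually yields) and certify $p_1(\tau_{G_j})>0$ via the algebraic relations satisfied by $\tau_{G_2}$ and $\tau_{G_3}$; I verified $p_1(\tau_{G_2})=1$ and $p_1(\tau_{G_3})=-\tau^3+\tau^2+2\tau+2>0$ for $\tau=\tau_{G_3}\in(1,2)$, so the evaluations are right. The paper's difference trick is softer and requires no root location at all, which is why it recurs throughout Section \ref{section3}; your version costs more bookkeeping but delivers the explicit values $\tau_{G_2}\approx 1.8393$ (the tribonacci constant) and $\tau_{G_3}\approx 1.9499$ as a byproduct, and your observation that Theorem \ref{Terragni} cannot be applied directly to compare $W_1$ with $W_2,W_3$ is accurate.

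Two minor points. First, your $p_1$ is the correct palindromic minimal polynomial of $\tau_Q\approx 1.72208$; the polynomial $t^4-t^3-t^2-t-1$ printed in Example \ref{mingrowth-dim2} is a sign typo (it is not even reciprocal), so the apparent discrepancy is not on your side. Second, in the positivity check of $g(t)=-t^3+t^2+2t+2$ on $[0,2]$ you should say explicitly that the unique critical point of $g$ in $(0,2)$, namely $(1+\sqrt{7})/3$, is a local \emph{maximum} (the local minimum of this cubic sits at the negative critical point); only then does the minimum of $g$ over $[0,2]$ occur at an endpoint, where it equals $2>0$.
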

\begin{proof}
By means of Steinberg's formula \eqref{eq:Steinberg}, we identify for each $G_i$ the finite Coxeter subgroups with their growth polynomials according to Table \ref{tab:table1} in order to deduce the following expressions for their growth functions $f_i(t)\,,\,1\le i\le 3$.
\begin{align}
\frac{1}{f_1(t^{-1})}&= h(t)\tag{a}\label{eq:g1}\\
\frac{1}{f_2(t^{-1})}&= h(t)-\frac{1}{[2,2,3]}\tag{b}\label{eq:g2}\\
\frac{1}{f_3(t^{-1})}&=h(t)-\frac{1}{[2,2,2]}\tag{c}\label{eq:g3}
\end{align}
Here, the help function $h(t)\,,\,t\not=0\,,$ is given by 
\begin{equation}\label{eq:help}
h(t)=1-\frac {4}{[2]}+\frac {3}{[2,2]}+\frac {1}{[2,3]}\,.
\end{equation}
By taking the differences between \eqref{eq:g1} and \eqref{eq:g2}, \eqref{eq:g3}, respectively, one obtains, for all $t>0$, 
\[
\frac{1}{f_1(t^{-1})}-\frac{1}{f_2(t^{-1})}=\frac{1}{[2,2,3]}>0\quad,\quad\frac{1}{f_1(t^{-1})}-\frac{1}{f_3(t^{-1})}=\frac{1}{[2,2,2]}>0\,\,.
\]
For $x=t^{-1}\in(0,1)$, we deduce that the smallest zero of $1/f_1(x)$ as given by the radius of convergence of the growth series $f_1(x)$ of $G_1$ 
is strictly bigger than the one of $1/f_2(x)$
and of $1/f_3(x)$. Hence, we get $\tau_{G_1}<\tau_{G_2}$ and $\tau_{G_1}<\tau_{G_3}$.
\end{proof}
For later use, we also compare the growth rate of $W_1=Q$ with the one of the Coxeter group
$W_4$ with generating subset $S_4$ of rank $4$ given by the Coxeter graph according to Figure \ref{fig:Cox-graphs2}.  Again, the group $W_4$ can be interpreted as a discrete subgroup
$G_4\subset O^+(3,1)$ generated by the reflections 
in the facets of a Coxeter tetrahedron of infinite volume. 
\begin{figure}[H]
\centering
\begin{picture}(60,35)
    \put(0,20){\circle*{3.5}}
    \put(20,20){\circle*{3.5}}
    \put(40,20){\circle*{3.5}}   
    \put(60,20){\circle*{3.5}} 
    
    \put(0,20){\line(1,0){60}}
    \put(6,24){$\scriptstyle 4$}
    \put(26,24){$\scriptstyle \infty$} 
    \put(46,24){$\scriptstyle 4$} 
\end{picture}
\vskip-.5cm
\caption{The abstract Coxeter group $W_4$}
\label{fig:Cox-graphs2}
\end{figure}
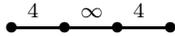
\begin{lem}\label{compare2}
$\quad \tau_{G_1}<\tau_{G_4}\,.$
\end{lem}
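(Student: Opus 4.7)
The plan is to mirror the computation of Lemma \ref{compare1}: write $1/f_4(t^{-1})$ explicitly via Steinberg's formula \eqref{eq:Steinberg}, subtract it from $h(t) = 1/f_1(t^{-1})$, and verify the resulting rational function is positive on $(0,1)$. The closing argument of Lemma \ref{compare1} will then yield $\tau_{G_1} < \tau_{G_4}$.

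The first step is to enumerate the finite standard parabolic subgroups of $W_4$. Besides the trivial subgroup and the four rank-one subgroups of type $A_1$, the two weight-$4$ edges of the Coxeter graph produce two $B_2$-subgroups, while the three pairs of non-adjacent generators yield three $A_1 \times A_1$-subgroups. At rank three, the only admissible subsets are those obtained by deleting one of the two interior generators of the path (i.e.\ one endpoint of the $\infty$-edge), which give two subgroups of type $A_1 \times B_2$; no rank-four subgroup is finite, because the $\infty$-edge always lies in the full generating set. Invoking Solomon's formula \eqref{eq:Solomon} together with Table \ref{tab:table1} then yields
\begin{equation*}
\frac{1}{f_4(t^{-1})} = 1 - \frac{4}{[2]} + \frac{3}{[2,2]} + \frac{2}{[2,4]} - \frac{2}{[2,2,4]}.
\end{equation*}

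Next, subtracting this from $h(t)$ and clearing to the common denominator $[2,2,3,4]$ produces the numerator $[2,4] - 2\,[2,3] + 2\,[3]$, which expands to $1 + t^4$. Hence
\[
\frac{1}{f_1(t^{-1})} - \frac{1}{f_4(t^{-1})} = \frac{1+t^4}{[2,2,3,4]} > 0 \quad \text{for every } t > 0.
\]
As in the end of the proof of Lemma \ref{compare1}, setting $x = t^{-1} \in (0,1)$ shows that the smallest positive zero of $1/f_1(x)$, namely $\tau_{G_1}^{-1}$, strictly exceeds the corresponding zero of $1/f_4(x)$, and therefore $\tau_{G_1} < \tau_{G_4}$.

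The only real pitfall I foresee is the bookkeeping of finite parabolic subgroups — in particular handling the two $B_2$-pieces and the resulting two rank-three $A_1 \times B_2$ subgroups correctly — but the polynomial identity $[2,4] - 2[2,3] + 2[3] = 1 + t^4$ is a one-line expansion, and both the enumeration and the identity can be independently verified with CoxIter.
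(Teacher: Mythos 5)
Your proof is correct and follows essentially the same route as the paper: the same Steinberg-formula expansion $1-\frac{4}{[2]}+\frac{3}{[2,2]}+\frac{2}{[2,4]}-\frac{2}{[2,2,4]}$ for $1/f_4(t^{-1})$, the same subtraction from $h(t)$, and the same positivity argument on $(0,1)$ to conclude $\tau_{G_1}<\tau_{G_4}$. (Your simplified difference $\frac{1+t^4}{[2,2,3,4]}$ is in fact the correct form; the paper's printed denominator $[2,3]\,(t^2+1)$ omits a factor $[2,2]$, a harmless typo that does not affect the sign.)
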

\begin{proof}
We proceed as in the proof of Lemma \ref{compare1} and establish the growth function $f_4(t)$ by means of Steinberg's formula. We obtain the following expression.
\begin{align*}
\frac{1}{f_4(t^{-1})}= 1-\frac {4}{[2]}+\frac {3}{[2,2]}+\frac {2}{[2,4]}-\frac{2}{[2,2,4]}\,\,.\tag{d}\label{eq:g4}\\
\end{align*}

By means of \eqref{eq:g1}, \eqref{eq:g4} and \eqref{eq:help}, we obtain the difference function
\begin{equation*}
\frac{1}{f_1(t^{-1})}-\frac{1}{f_5(t^{-1})}=\frac{1}{[2,3]}-\frac{2}{[2,4]}+\frac{2}{[2,2,4]}=\frac{t^4+1}{[2,3]\,(t^2+1)}
>0\quad,\quad\forall t>0\,\,,
\end{equation*}
and conclude as at the end of the previous proof.
\end{proof}

Let us return and consider a compact Coxeter polyhedron $P\subset\mathbb H^5$ with $N$ facets and associated hyperbolic Coxeter group $G$. By Example \ref{ex1}, we know that there are no compact Coxeter simplices anymore so that $N\ge7$. Furthermore, by Theorem \ref{1-dotted}, $P$ has at least one pair of non-intersecting facets. In the sequel, we discuss the cases $N=7$, $N=8$ and $N\ge9$.

\medskip
For $N=7$, we are left with the three Kaplinskaja prisms (and their glueings) as given by the Makarov prism $M=:M_3$ based on $[5,3,3,3,3]$, its closely related Coxeter prism $M_4$ based on $[5,3,3,3,4]$ as well as the Coxeter prism $K$ with Vinberg graph depicted in Figure \ref{fig:n+2} and treated in Example \ref{growth-K}. By means of the software CoxIter (or some lengthy computation), one obtains the growth rate inequalities
\[
1.64759\approx\tau_M<\tau_{M_4}<1.84712<\tau_K\approx2.08379\,\,,
\]
 which confirm the assertion of Theorem \ref{thm:B} in this case.
 
\medskip
For $N=8$, we dispose of Tumarkin's classification list comprising all compact Coxeter polyhedra with $n+3$ facets. For $n=5$, these polyhedra have Vinberg graphs with exactly three (consecutive) dotted edges except for the polyhedron $T\subset\mathbb H^5$ depicted in Figure \ref{fig:Tumarkin}. 

The Coxeter graph associated to $T$ contains the proper subgraph $\gr{4}\grinf\gr{4}\bullet$ which is associated to the Coxeter group $W_4$ studied above; see Figure \ref{fig:Cox-graphs2}.
By means of Theorem \ref{Terragni}, Lemma \ref{compare2} and \eqref{comp-value5}, we deduce that
\[
\tau_M<\tau_{G_4}\le\tau_T\,\,.
\]
For the Coxeter graph of a polyhedron $P$ with $8$ facets in $\mathbb H^5$ that is not isometric to $T$, we consider its proper order 4 subgraph $\,\grinf\grinf\grinf\bullet\,$. In a similar way, by
Theorem \ref{Terragni}, Lemma \ref{compare1} and \eqref{comp-value5}, we obtain
\[
\tau_M<\tau_Q\le\tau_P\,\,.
\]

\medskip
Let $N\ge9$. By Remark \ref{one-dotted}, the Vinberg graph of the polyhedron $P\subset\mathbb H^5$ with $N$ facets has at least two dotted edges. However, two dotted edges are separated by an edge in view of the signature condition of the Gram matrix $\Gr(P)$; see Section \ref{section2-1}.

\smallskip
Consider the Coxeter graph $\Sigma$ of order $N$ of the hyperbolic Coxeter group $G$ associated to $P$. By the above, there is a proper connected subgraph $\sigma$ of order $4$ in $\Sigma$, depicted in Figure \ref{fig:sigma-5}, with weights $\,p,q,r,s,t\in\{2,3,\ldots,\infty\}$ where at least one of them is equal to $\infty$.
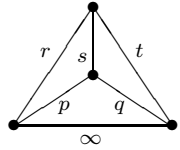
\begin{figure}[H]
\centering
\begin{picture}(60,55)
    \put(0,0){\circle*{3.5}}
    \put(60,0){\circle*{3.5}}
    \put(30,45){\circle*{3.5}} 
    \put(30,19){\circle*{3.5}}  
    
    \put(0,0){\line(1,0){60}}
     \put(0,0){\line(2,3){30}}
        \put(1,0){\line(3,2){29}}
          \put(59,0){\line(-3,2){29}}
     
       \put(60,0){\line(-2,3){30}}
         \put(30,18){\line(0,1){28}}
          
                 \put(0,0){\line(1,0){60}}
                     \put(0,0){\line(1,0){60}}   
    \put(10,26){$\scriptstyle r$}
    \put(24,24){$\scriptstyle s$}  
    \put(46,26){$\scriptstyle t$} 
        \put(17,6){$\scriptstyle p$}  
    \put(38,6){$\scriptstyle q$} 
    \put(25,-7){$\scriptstyle \infty$} 
   
\end{picture}
\caption{The subgraph $\sigma=\sigma(p,q,r,s,t)$}
\label{fig:sigma-5}
\end{figure}
In view of Figure \ref{fig:Cox-graphs1}, describing the three Coxeter groups $G_1,G_2$ and $G_3$, and by means of Theorem \ref{Terragni}, the growth rate of $\Sigma$, and hence of $P$, can be estimated from below according to 
\[
\tau_{G_i}\le\tau_{\sigma}\le\tau_{\Sigma}\quad\hbox{for at least one } i\in\{1,2,3\}\,\,.
\]
By Lemma \ref{compare1} and \eqref{comp-value5}, we finally obtain that
\begin{equation*}\label{eq:conclusion}
\tau_M<\tau_{G_1}\le\tau_P\,\,,
\end{equation*}

as desired. This finishes the proof of Theorem \ref{thm:B}.

\hfill\qedsymbol



\end{document}